\documentclass[a4paper,12pt]{amsart}
\usepackage[italian,english]{babel}

\usepackage{geometry}
\usepackage{enumitem}
\usepackage{mathrsfs}
\usepackage{comment}
\usepackage[colorlinks, linkcolor=blue,anchorcolor=Periwinkle,
citecolor=blue,urlcolor=Emerald]{hyperref}

\usepackage{tikz}
\usetikzlibrary{arrows.meta}

\usepackage{amsmath}
\usepackage{amsthm}
\usepackage[T1]{fontenc}
\usepackage[utf8]{inputenc}
\usepackage{amssymb}
\usepackage{csquotes}
\usepackage{leftidx}

\usepackage{mathrsfs}
\usepackage[all,cmtip]{xy}
\usepackage{graphicx}
\usepackage{tikz-cd}
\tikzset{dynkdot/.style={circle,draw,scale=.38}}

\newtheorem{prop}{Proposition}[section]
\newtheorem{lem}[prop]{Lemma}
\newtheorem{teo}[prop]{Theorem}
\newtheorem{cor}[prop]{Corollary}

\theoremstyle{definition}
\newtheorem{de}[prop]{Definition}

\newtheorem{ex}[prop]{Example}

\newcommand{\sD}{{\mathcal D}}
\newcommand{\sE}{{\mathcal E}}

\newcommand{\sH}{{\mathcal H}}

\newcommand{\sM}{{\mathcal M}}

\newcommand{\ssM}{{\mathfrak M}}

\newcommand{\D}{{\mathbb D}}

\newcommand{\F}{{\mathbb F}}

\newcommand{\R}{{\mathbb R}}

\newcommand{\Z}{{\mathbb Z}}

\newcommand{\Hom}{{\mathrm{Hom}}}
\newcommand{\Ext}{{\mathrm{Ext}}}

\newcommand{\kQ}{\mathbf{k}Q}

{\left\lbrace\begin{array}{@{}l@{}}}%
{\end{array}\right.}

\title[Degenerations in graded quiver varieties and in derived categories]{Degenerations in graded quiver varieties\\ and in  derived categories 
of Dynkin quivers}

\subjclass{Primary: 	16E35; secondary: 17B37.}
\keywords{
Graded quiver varieties;
derived categories of quivers;
degeneration order.}

\usepackage{comment}
\usepackage[colorlinks, linkcolor=blue,anchorcolor=Periwinkle,
citecolor=blue,urlcolor=Emerald]{hyperref}

\newcommand\Fang[1]{{\color{red} \sf Fang: [\% #1]}}

\author{Alessandro Contu}
\address[Alessandro Contu]{Research Institute for Mathematical Sciences, 
Kyoto University, 
Kyoto 606-8502 JAPAN
}
\email{contu.alessandro.63z@st.kyoto-u.ac.jp}
\author{Fang Yang}
\address[Fang Yang]{Max-Planck Institute for Mathematics, Vivatsgasse 7 , Bonn 53111, Germany}
\email{f.yang@mpim-bonn.mpg.de}

\begin{document}
\maketitle
\begin{abstract}
For any acyclic quiver, Keller--Scherotzke provided a stratifying functor from the category of finite-dimensional modules of the singular Nakajima category to the derived category of the quiver. Under this functor, a degeneration of strata of a graded quiver variety corresponds to a degeneration, in the sense of Jensen–Su–Zimmermann, in the derived category. In this article, for any Dynkin quiver, we further investigate Jensen–Su–Zimmermann's partial order and show that any degeneration of objects in the derived category can be obtained in this way.
\end{abstract}

\tableofcontents
\section{Introduction}
Let $\mathfrak{g}$ be a finite-dimensional complex simple Lie algebra of $ADE$ type and let $U_q(L\mathfrak{g})$ be the associated quantum loop algebra. In the study of the representation theory of $U_q(L\mathfrak{g})$, a preeminent role has been played by Nakajima's graded quiver varieties, introduced in \cite{Nakajima2001}. In more detail, let $Q$ be a quiver of the same Dynkin type as $\mathfrak{g}$ and let $I$ be its set of vertices. The quiver $Q$ extends to the repetition quiver $\widehat{Q}$, whose set of vertices $\widehat{I}$ is a subset of $I\times \mathbb{Z}$ (see section \ref{subsect_Happel}). For any $\widehat{I}$-graded vector space $W$, Nakajima constructed the graded quiver variety $\ssM_0^{\bullet}(W)$, an affine variety obtained through a geometric quotient. Moreover, he shows that any graded quiver variety admits a stratification
\begin{equation}
    \ssM_0^{\bullet}(W)=\bigsqcup_V \ssM_0^{\bullet\mathrm{reg}}(V,W).
\end{equation}
where the  \emph{regular strata} $\ssM_0^{\bullet\mathrm{reg}}(V,W)$ are parametrized by vector spaces $V$ graded over the complement of $\widehat{I}$.
Among other applications, Nakajima computed the multiplicity of the simple modules in the standard modules of $U_q(L\mathfrak{g})$ through the intersection cohomology of the above strata.
Despite their importance, the definition of graded quiver varieties and of their strata is not really explicit. Nevertheless, various results offer equivalent descriptions: 
\begin{itemize}
    \item Hernandez--Leclerc \cite{HL_quantum_Groth_rings_derived_Hall} realized certain graded quiver varieties as varieties of representations of the quiver $Q$ such that, under this correspondence, the regular strata correspond to the orbits of representations of $Q$.  
    \item Leclerc--Plamondon \cite{Leclerc_Plamondon_2013} extended Hernandez--Leclerc's result by realizing a larger class of graded quiver varieties as varieties of representation of the repetitive algebra, maintaining the bijection among strata and orbits. 
    \item Keller--Scherotzke \cite{Keller_Scherotzke_2016}, through a categorical approach, generalized these results to any graded quiver varieties associated to an acyclic quiver $Q$, as defined by Qin \cite{Qin_tqchar_2014}. 
    \item Along the same categorical direction, recently  Canesin \cite{canesin2026categoriessplitfiltrationsgraded} investigated the $n$-fold affine graded tensor product varieties.
\end{itemize}

In Keller--Scherotzke's work, different partial orders come into play:
\begin{itemize}
    \item the degeneration order on the set of strata of a quiver variety.
    \item Nakajima's order $\le$ on the set $\mathcal{M}^+$ of \emph{dominant monomials}, that is, the monic monomial in the variables $Y_{i,p}$, for $(i,p)$ in $\widehat{I}$ (see \ref{subsect_Nakajima_order}).
    \item Jensen--Su--Zimmerman's degeneration order $\le_{\Delta}$ on the set of objects of the derived category $D^b(Q)$.  
\end{itemize}
These sets are related in the following way:
 each stratum $\ssM_0^{\bullet\mathrm{reg}}(V,W)$ is naturally  associated to a monomial $m(V,W)$ in the variables $Y_{i,p}^{\pm}$. It is proved in \cite{Qin_tqchar_2014} that a stratum is non-empty if and only if the monomial is dominant and that, under this assignment, Nakajima's order on dominant monomials corresponds to the degeneration order among strata.
On the other hand, using Happel's equivalence, we obtain a bijective correspondence $m\leftrightarrow V(m)$ between the set of dominant monomials and the isoclasses of indecomposable objects of $D^b(Q)$.
Finally, for any graded quiver variety $\ssM_0^{\bullet}(W)$, Keller--Scherotzke proved that the map $\phi$ that associates to the stratum $\ssM_0^{\bullet\mathrm{reg}}(V,W)$ the  indecomposable object $V(m(V,W))$ respects the  partial orders.
To sum things up, for any $\widehat{I}$-graded vector space $W$, we have a diagram of partially ordered sets
\[\begin{tikzpicture}[
   hookright/.style={{Hooks[right]}->},
   hookleft/.style={{Hooks[left]}->},]
\node (M) at (0,0) {$\{ \ssM_0^{\bullet\mathrm{reg}}(V,W)\ |\ \ssM_0^{\bullet\mathrm{reg}}(V,W)\neq \emptyset\}$};
\node (m) at (8,0) {$\{m'\in \mathcal{M}^+\ |\ m'\leq m(0,W)\}$};
\node (D) at (4,-2.5) {$\{N\in \mathrm{Iso}(D^b(Q))\ |\ N\leq V(m(V,W))\}$};
\draw[->] (M) --  
(m) node[midway, above] {$\sim$}
node[midway, below] {$m(-,-)$};
\draw[hookright] (M) --  
(D) node[midway, below, sloped] {$\phi$} ;
\draw[hookleft] (m) --  
(D) node[midway, below, sloped] {$V(-)$};
\end{tikzpicture}
\]

The aim of this article is to complete the above picture by showing that, for Dynkin quivers, the above embeddings are actually isomorphisms of partially ordered sets. To this end, we start by presenting several results concerning the degeneration order on $D^b(Q)$. Inspired by an analogous result for the degeneration order on the module category of $Q$ (see \cite{Riedtmann1986,Bongartz1996,Zwara1998}), we show that, if $N\le_{\Delta}M$, then there exists a chain of degenerations
\[N=M_0\le_{\Delta}M_1\le_{\Delta}\cdots\le_{\Delta}M_s=M\]
such that, for any $i$, there is a decomposition $M_i\cong U_i\oplus V_i$ fitting into a triangle
\[U_i\to M_{i-1}\to V_i\to U_i[1].\]
Moreover, we define a notion of minimal degeneration (a little bit different from \cite{Zwara_2000}) and show that  each $N\le_{\Delta}M$ can be decomposed into a chain of minimal degenerations. Finally, we prove that each minimal degeneration $V(n)\le_{\Delta} V(m)$ satisfies $n\le m$, by relying on Hernandez--Leclerc's isomorphism between the quantum Grothendieck ring of $U_q(L\mathfrak{g})$ and the derived Hall algebra of $D^b(Q)$. This will lead us to the following result:
\begin{teo}[Theorem \ref{Thm:Correspondence}] For any $n,m\in \sM^+$,
    \[n\le m\qquad \text{ if and only if}\qquad V(n)\le_{\Delta}V(m).\]
\end{teo}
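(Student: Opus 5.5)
The two implications will be handled separately. The direction $n\le m\Rightarrow V(n)\le_\Delta V(m)$ comes from the diagram in the introduction, and the converse uses the structure theory of $\le_\Delta$ together with Hernandez--Leclerc's identification of the quantum Grothendieck ring with the derived Hall algebra.

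\emph{Forward direction.} Assume $n\le m$. Choose an $\widehat{I}$-graded space $W$ with $m(0,W)=m$, i.e.\ $W$ is read off from the exponents of the dominant monomial $m$. By Qin's result, the map $m(-,-)$ is an order isomorphism between the nonempty strata of $\ssM_0^{\bullet}(W)$ and the dominant monomials $\le m(0,W)$. Hence there is a $V$ with $m(V,W)=n$, and the stratum $\ssM_0^{\bullet\mathrm{reg}}(V,W)$ lies in the closure of the open stratum $\ssM_0^{\bullet\mathrm{reg}}(0,W)$. Keller--Scherotzke's stratifying functor sends strata to objects and respects the orders. It therefore gives $\phi(V,W)=V(n)\le_\Delta \phi(0,W)=V(m)$, which is the required implication.

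\emph{Converse.} Assume $V(n)\le_\Delta V(m)$. By the decomposition into minimal degenerations, there is a chain $V(n)=M_0\le_\Delta M_1\le_\Delta\cdots\le_\Delta M_s=V(m)$ in which each step is minimal. Every object of $D^b(Q)$ is of the form $V(m')$ for a unique dominant $m'$, because $V(-)$ is a bijection onto isoclasses of objects. So $M_i=V(m_i)$ for some dominant $m_i$, and by transitivity of Nakajima's order it suffices to treat a single minimal degeneration $V(n)\le_\Delta V(m)$.

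\emph{Minimal step.} This is the main obstacle. By the triangle-type description of degenerations, we may take $V(m)\cong U\oplus V'$ with a triangle $U\to V(n)\to V'\to U[1]$, and minimality should allow us to assume this triangle is non-split with indecomposable ends, or at least with controlled ends. The plan is to translate into the derived Hall algebra: the existence of the triangle means $[V(n)]$ appears with nonzero coefficient in the product $[V']*[U]$ (or $[U]*[V']$, depending on the convention), computed via Toën's formula. Hernandez--Leclerc's isomorphism sends $[V(m')]$, suitably normalized, to the $(q,t)$-character of the standard module $M(m')$. Since $V(m)=U\oplus V'$, the class $[V(m)]$ is, up to a power of $t$, the ordered product of $[U]$ and $[V']$, so $\chi_{q,t}(M(m))$ corresponds to this product. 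Expanding that product in the basis of classes of objects, $[V(n)]$ appears with nonzero coefficient, so the monomial $n$ has a nonzero coefficient in $\chi_{q,t}(M(m))$. By Nakajima's result that the dominant monomials occurring in $\chi_{q,t}(M(m))$ are all $\le m$, it follows that $n\le m$.

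The delicate point is ensuring that no cancellation occurs in this expansion. I would argue positivity: the structure constants of the derived Hall algebra counted at $t=\sqrt{q}$ are nonnegative numbers of triangles. Alternatively, one can use that the coefficients of the $(q,t)$-characters of standard modules have positive specializations, so a nonzero Hall coefficient forces a nonzero monomial coefficient.
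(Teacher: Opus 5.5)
Your forward direction is the paper's argument, but the closure relation is reversed. $\ssM_0^{\bullet\mathrm{reg}}(0,W)$ is the closed stratum $\{0\}$, and it is this point that lies in the closure of $\ssM_0^{\bullet\mathrm{reg}}(V,W)$ (by Qin, since $m(V,W)=n\le m=m(0,W)$). Fed into Keller--Scherotzke's criterion, the inclusion you wrote would give $V(m)\le_\Delta V(n)$. The reduction of the converse to minimal degenerations matches the paper.

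The genuine gap is the minimal step: you ask one Hall product to do two incompatible things. If $u_{V(m)}=u_{U\oplus V'}$ is a scalar multiple of a product of $u_U$ and $u_{V'}$, that product is $u_{V'}u_U$. It counts triangles $V'\to Z\to U\to V'[1]$ and is a single term only because all of these split. The product containing $u_{V(n)}$ is the other one, $u_Uu_{V'}=\sum_Z F^Z_{U,V'}u_Z$. It contains $u_{V(n)}$ precisely because it is not a multiple of $u_{V(m)}$, since $V(n)\not\cong V(m)$. The missing idea is the bar involution, an anti-automorphism fixing every $L_t(Y_{i,p})$. For indecomposable $U$ and $V'$, \eqref{qtStd} and \eqref{eq:barQTStd} show that $\Phi(M_t(m))$ is a nonzero multiple of $u_{V'}u_U$, whereas $\Phi(\overline{M_t(m)})$ is a nonzero multiple of $u_Uu_{V'}$.

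Two further steps fail as written. First, the occurrence of $u_{V(n)}$ only says that $M_t(n)$ occurs when $\overline{M_t(m)}$ is expanded in the standard basis. The $u_X$ correspond to standard $(q,t)$-characters, not to monomials of the quantum torus, so this does not by itself put the monomial $n$ into $\chi_{q,t}(M(m))$. The paper concludes instead with Nakajima's triangularity \eqref{barInv}, $\overline{M_t(m)}\in M_t(m)+\sum_{m'<m}\Z[t^{\pm1/2}]M_t(m')$. This compares coefficients directly in the basis $\{u_X\}$, so positivity and cancellation never enter.

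Second, the reduction to indecomposable ends is only asserted (``minimality should allow us''). Without it, neither $u_Uu_{V'}$ nor $u_{V'}u_U$ need be a single term, and the identification above breaks down. The paper proves this reduction as Lemma~\ref{lem:MinExtTriangle}. It pushes the extension class forward along a projection $\rho_i\colon Y'\to Y'_i$ with $(\rho_i)_*[\xi]\neq 0$, pulls it back along an inclusion $Y''_j\to Y''$, and uses minimality to see that neither step changes $X$.

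Your monomial bound can be salvaged if applied to the correct element, and then minimality is not even needed. Write $U=V(u)$ and $V'=V(v')$.
\begin{itemize}
\item Every monomial of $M_t(u)M_t(v')$ is $\le uv'=m$.
\item Each $M_t(n')$ contains the monomial $n'$ with coefficient a power of $t$, and otherwise only strictly smaller monomials.
\item Hence for a maximal $n'$ in the standard expansion of $M_t(u)M_t(v')$, the monomial $n'$ cannot cancel, so $n'\le m$. Every $n'$ that occurs lies below such a maximal one, so all of them are $\le m$.
\item Finally, $\Phi(M_t(u)M_t(v'))$ is a nonzero multiple of $u_Uu_{V'}$, which contains $u_{V(n)}$.
\end{itemize}
This gives $n\le m$ for any ext-degeneration.
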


The article, in addition to this introduction, is composed of two sections. In section \ref{sec:Preliminanry} we collect results and terminology on dominant monomials, graded quiver varieties and derived categories. In Section \ref{sec:degenerations}, we study the degeneration order on the derived category and provide our main theorem.

\medskip
\section{Preliminaries}\label{sec:Preliminanry}
In this section, we define the partial orders on dominant monomials, isoclasses of objects of the derived category and strata of quiver varieties. Moreover, we recall a few notions about derived Hall algebras.

Let $Q$ be a Dynkin quiver. write $I=\{1,\dots,n\}$ for its set of vertices. Let $\mathbf{k}$ be a field. Denote by $\mathrm{mod}\mathbf{k}Q$ the category of finite dimensional modules of its path algebra. We write $D^b(Q)$ for its bounded derived category, with shift functor $[1]$.

\subsection{Happel's correspondence}
\label{subsect_Happel}
 We fix a \emph{height function} $\widehat{\varepsilon}$ on the quiver $Q$, that is, a function 
\[ \widehat{\varepsilon}: I\rightarrow \mathbb{Z}, i\mapsto \widehat{\varepsilon}_i\]
such that, if there is an arrow $i\rightarrow j$, then $\widehat{\varepsilon}_i-\widehat{\varepsilon}_j=1$. Let $\widehat{I}$ be the index set
\[ \widehat{I}=\{(i,p)\in I\times\mathbb{Z}\ |\  | p-\widehat{\varepsilon}_i| \in 2\mathbb{Z} \}.\]

The repetition quiver $\widehat{Q}$ is the quiver whose vertex set is $\widehat{I}$ and with arrows \[(i,p) \rightarrow (j,p+1),\] for any $(i,p) \in \widehat{I}$ and adjacent vertices $i$ and $j$ in $Q$.
The \emph{mesh category} $\mathbf{k}(\widehat{Q})$  is the $\mathbf{k}$-linear category whose objects are identified with the vertices of $\widehat{Q}$ and
whose spaces of morphisms $\mathrm{Hom}_{\mathbf{k}(\widehat{Q})}(A,B)$ are the spaces of linear combinations of paths from $A$ to $B$, modulo the \emph{mesh relations}: for each $(i,p)$ in $\widehat{I}$, the sum of all paths from $(i,p)$ to $(i,p+2)$ vanishes. The composition of morphisms is induced by the composition of paths.

We recall a fundamental result due to Happel. Let $I_i$ be the indecomposable injective associated to the vertex $i$ and write $\tau$ for the Auslander--Reiten translation of $D^b(Q)$. We write $\mathrm{Ind}D^b(Q)$ for the full subcategory of $D^b(Q)$ formed by the indecomposable objects.
\begin{teo}[{\cite[Chapter 1, Proposition 5.6]{Happel_articolo_1987}}]\label{Happel's Thm}
There is an equivalence of $\mathbf{k}$-categories
\[V(-):\mathbf{k}(\widehat{Q})\xrightarrow{\sim} \mathrm{Ind}D^b(Q)\]
such that, for any $(i,p)$ in $\widehat{I}$, we have
\[V(i,p)=\tau^{(\widehat{\varepsilon}_i-p)/2}(I_i).\]
\end{teo}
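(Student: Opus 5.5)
Happel's theorem has two parts. First, $\mathrm{Ind}\,D^b(Q)$ is identified with the mesh category of its Auslander--Reiten quiver. Second, that Auslander--Reiten quiver is $\Z Q^{op}$, which is identified with $\widehat{Q}$ through the height function. The plan establishes these in order.

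\textbf{Step 1: $D^b(Q)$ is Krull--Schmidt, and its indecomposables are shifts of modules.} Since $\mathbf{k}Q$ is hereditary, every object of $D^b(Q)$ is isomorphic to the direct sum of its shifted cohomology modules, $\bigoplus_k H^k(X)[-k]$. Hence the indecomposable objects are exactly the $M[k]$ with $M$ an indecomposable $\mathbf{k}Q$-module. Because $Q$ is Dynkin, Gabriel's theorem makes $\mathrm{mod}\,\mathbf{k}Q$ representation-finite, with indecomposables in bijection with the positive roots.

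\textbf{Step 2: existence of triangles and the shape of the Auslander--Reiten quiver.} $D^b(Q)$ has a Serre functor $\mathbb{S}=\nu$, the derived Nakayama functor, so it has Auslander--Reiten triangles with $\tau=\mathbb{S}[-1]$. These triangles are assembled as follows:
\begin{itemize}
\item Auslander--Reiten sequences in $\mathrm{mod}\,\mathbf{k}Q$ give the triangles ending at non-projective modules.
\item The triangles $I_i[-1]\to \mathrm{rad} P_i\oplus \ldots \to P_i\to I_i$ glue the copies $\mathrm{mod}\,\mathbf{k}Q[k]$ together.
\end{itemize}
One then shows that the Auslander--Reiten quiver $\Gamma$ is connected and stable, with each $\tau$-orbit meeting each shift of the injectives exactly once. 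The injectives $I_i$ form a section isomorphic to $Q^{op}$, since $\Hom(I_i,I_j)$ is governed by paths $j\to i$. Therefore $\Gamma\cong \Z Q^{op}$. Via the height function, $\Z Q^{op}\cong\widehat{Q}$ with $(i,\widehat{\varepsilon}_i)\mapsto I_i$. The shift by $2$ in $p$ corresponds to $\tau^{-1}$, and the arrows $(i,p)\to(j,p+1)$ match the irreducible maps, which explains the normalization $V(i,p)=\tau^{(\widehat{\varepsilon}_i-p)/2}(I_i)$.

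\textbf{Step 3 (main obstacle): $\mathrm{Ind}\,D^b(Q)\simeq \mathbf{k}(\Gamma)$.} This needs a functor from the mesh category sending arrows to chosen irreducible morphisms. The choice must be made so that every mesh sum vanishes, i.e.\ so that it equals the composite of the two maps of an Auslander--Reiten triangle. I would proceed as follows:
\begin{itemize}
\item Choose irreducible maps inductively along the slices of $\widehat{Q}$, ordered by $p$.
\item At each mesh, rescale the outgoing maps using the Auslander--Reiten triangle so that the mesh relation holds.
\end{itemize}
This is possible because $\widehat{Q}$ is simply connected; in particular it has no multiple arrows since $Q$ is a tree. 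The functor is surjective on objects up to isomorphism by Step 2. For full faithfulness:
\begin{itemize}
\item Since $\Gamma$ is directed and the category is locally bounded with only finitely many indecomposables in each $\Hom$-window, every morphism between indecomposables is a sum of composites of irreducible maps, because the radical is nilpotent on finite intervals.
\item Fullness therefore follows.
\item Faithfulness follows from a dimension count. Apply $\Hom(V(x),-)$ to the Auslander--Reiten triangles; it gives exact sequences except at $x$ itself. The dimensions $\dim\Hom(V(x),V(y))$ therefore satisfy the same additive recursion as the mesh-category Hom dimensions computed along $\widehat{Q}$. Both start from the same initial value, $\mathbf{k}$ at $y=x$.
\end{itemize}
Equal dimensions together with fullness give an isomorphism on each Hom space.
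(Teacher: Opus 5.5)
The paper gives no proof of this statement; it quotes Happel, whose argument is the one you outline: identify the Auslander--Reiten quiver with $\Z Q^{op}\cong\widehat{Q}$, then show that $\mathrm{Ind}\,D^b(Q)$ is standard. Steps 1--2 and the inductive construction of the functor in Step 3 are fine in outline, but the faithfulness step fails as written.

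\textbf{Where it fails.} You claim that $\Hom(V(x),-)$ turns every Auslander--Reiten triangle $\tau Y\to E\to Y\xrightarrow{w}\tau Y[1]$ with $Y\not\cong V(x)$ into a short exact sequence.
\begin{itemize}
\item Right exactness does hold.
\item Injectivity does not. The kernel of $\Hom(V(x),\tau Y)\to\Hom(V(x),E)$ is the image of $w[-1]\circ-\colon\Hom(V(x),Y[-1])\to\Hom(V(x),\tau Y)$, and this is nonzero when $Y\cong V(x)[1]$.
\item Already for $Q=A_1$ the triangle $X\to 0\to X[1]\xrightarrow{\mathrm{id}}X[1]$ makes your recursion give $\dim\Hom(X,X[1])=0-1=-1$.
\end{itemize}
So the derived-side dimensions do not obey one additive recursion from a single initial value. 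To save the count you would have to show that in $\mathbf{k}(\widehat{Q})$ the map $\mathbf{k}(\widehat{Q})(x,\tau y)\to\bigoplus_{e\to y}\mathbf{k}(\widehat{Q})(x,e)$ fails to be injective exactly at the corresponding vertex and nowhere else. That amounts to already knowing the Hom spaces of the mesh category, which is what you are trying to prove. Your phrase ``the mesh-category Hom dimensions computed along $\widehat{Q}$'' hides exactly this.

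\textbf{How to fix it.} Compare cokernels instead of dimensions. For $y\neq x$ the sequence $\mathbf{k}(\widehat{Q})(x,\tau y)\to\bigoplus_{e\to y}\mathbf{k}(\widehat{Q})(x,e)\to\mathbf{k}(\widehat{Q})(x,y)\to 0$ is exact:
\begin{itemize}
\item Surjectivity holds because every path of positive length into $y$ ends with an arrow $\alpha_e\colon e\to y$.
\item Exactness in the middle holds because every element of the mesh ideal at $(x,y)$ has the form $m_y b+\sum_e\alpha_e i_e$. Here $m_y$ is the mesh at $y$ and $i_e$ lies in the mesh ideal at $(x,e)$. Then use that the path category is free.
\end{itemize}
Likewise $\Hom(X,\tau Y)\to\bigoplus\Hom(X,E)\to\Hom(X,Y)\to 0$ is exact for $Y\not\cong X$. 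Your functor sends the first sequence to the second, since each mesh goes to an Auslander--Reiten triangle. Now induct on $s-p$. The map on $\mathbf{k}(\widehat{Q})((i,p),(j,s))$ is the map induced between cokernels of isomorphic maps, hence bijective. The base case is directedness, i.e.\ $\Hom(V(i,p),V(j,s))=0$ if $s<p$, or if $s=p$ and $(i,p)\neq(j,s)$, together with $\mathrm{End}(V)=\mathbf{k}$. This gives fullness and faithfulness at once, so the radical-nilpotence argument is no longer needed.

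\textbf{A smaller point.} Your construction of the functor does not really use simple connectedness. It uses two facts. First, $\mathrm{End}(V)=\mathbf{k}$ and all spaces of irreducible maps are one-dimensional, so any family of irreducible maps along the arrows out of $\tau y$ is a source map and completes to an Auslander--Reiten triangle. Second, proceeding level by level meets each mesh exactly once. Both facts hold for Dynkin $Q$ over any field, which matters because the paper later takes $\mathbf{k}=\F_q$.
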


\subsection{Degeneration order in $D^b(Q)$}

\begin{de}[{\cite{Jensen_Su_Zimmerman_2005II_tring_cat, Jense_Su_Zimmermann_2005_der_cat}}]
    Let $X$ and $Y$ be in $D^b(Q)$. The object $X$ \emph{degenerates} to $Y$ (or, alternatively, $Y$ is a \emph{degeneration} of $X$) if there exists an object $Z$ of $D^b(Q)$ fitting into a triangle
    \[ Y\rightarrow X\oplus Z\rightarrow Z\rightarrow Y[1].\]
    In this case, we write \[X\leq_{\Delta} Y.\]
\end{de}
 Note that we choose the opposite direction for the inequality with respect to \cite{Jensen_Su_Zimmerman_2005II_tring_cat, Jense_Su_Zimmermann_2005_der_cat}. As shown in \cite{Jensen_Su_Zimmerman_2005II_tring_cat}, the relation $\leq_{\Delta}$ is a partial order on the set of isomorphism classes of objects of the derived category $D^b(Q)$.

\begin{de}
    Let $X$ and $Y$ be in $D^b(Q)$. The object $X$ \emph{ext-degenerates} to $Y$ (or, alternatively, $Y$ is a \emph{ext-degeneration} of $X$) if $Y$ decomposes as $Y\cong Y_1\oplus Y_2$ and there exists  a triangle
    \[ Y_1\rightarrow X\rightarrow Y_2\rightarrow Y_1[1].\]
    In this case, we write \[X\leq_{\mathrm{ext}} Y.\]
\end{de}
Notice that if $Y$ ext-degenerates to $X$, then it also degenerates to $X$,  as we have the triangle  $X_1\oplus X_2\to Y\oplus X_2\to X_2 \to (X_1\oplus X_2)[1]$.

\subsection{Quantum Grothendieck rings}
\label{subsect_Nakajima_order}
The set $\widehat{I}$ appears as an index set also in the representation theory of quantum affine algebras. For any $(i,p)$ in $\widehat{I}$, consider the variable $Y_{i,p}$.  Let $\mathcal{M}$ be the set of monic monomials  in the variables $(Y_{i,p}^{\pm})_{(i,p)\in \widehat{I}}$. We say that a monomial $\mathcal{M}$ is \textit{dominant} if it contains only positive powers of the $Y_{i,p}$. Let $\mathcal{M}^+$ be the set of dominant monomials.
Nakajima defined a partial order $\le$ on $\sM$: for any $(i,p)$ in $\widehat{I}$, set
\[ A_{i,p+1}=Y_{i,p}Y_{i,p}\prod_{i\sim j}Y_{j,p+1}^{-1}.\]
For $m,m'$ in $\mathcal{M}$, we say  $m'\leq m$ if $m'$ can be written in the form
\[ m' = m A^{-1},\]
where $A$ is a finite product of certain $A_{i,p+1}$, for $(i,p)\in \widehat{I}$.

Let $\mathfrak{g}$ be a complex simple Lie algebra of same Dynkin type as $Q$ and let $U_q(L\mathfrak{g})$ be the associated quantum loop algebra over  $\mathbb{C}$. It follows from the work of Frenkel-Reshetikhin \cite{FrenkelReshetikhin98} that the dominant monomials in $\mathcal{M}^+$ parametrize the simple objects of a full Serre subcategory, introduced by Hernandez--Leclerc \cite{HL_quantum_Groth_rings_derived_Hall} and denoted $\mathscr{C}_\mathfrak{g}^\mathbb{Z}$, of the monoidal category $\mathrm{mod}U_q(L\mathfrak{g})$ of finite-dimensional $U_q(L\mathfrak{g})$-modules. Understanding the category $\mathrm{mod}U_q(L\mathfrak{g})$ essentially reduces to the study of the category $\mathscr{C}_\mathfrak{g}^\mathbb{Z}$.

Even if the monoidal category $\mathrm{mod}U_q(L\mathfrak{g})$ is not symmetric, the Grothendieck ring  $\mathcal{K}(\mathscr{C}_\mathfrak{g})$ is commutative, as proved by Frenkel-Reshetikhin \cite{FrenkelReshetikhin98} through  the construction of the \emph{$q$-character} morphism, which is an injective ring homomorphism \[\chi_q: \mathcal{K}(\mathscr{C}_\mathfrak{g}^\mathbb{Z})\rightarrow \mathbb{Z}[Y_{i,p}^\pm]_{(i,p)\in \widehat{I}}.\]
Following \cite{Hernandez2004_alg_approach}, the quantum Grothendieck ring $\mathcal{K}_t(\mathscr{C}_\mathfrak{g}^\mathbb{Z})$ is a non-commutative version of the Grothendieck ring of $\mathscr{C}_\mathfrak{g}^\mathbb{Z}$, contained in a  \emph{quantum torus} over  $\Z[t^{\pm 1/2}]$ in the variables $(\leftidx{^t}{Y}_{i,p}^{\pm 1})_{(i,p)\in \widehat{I}}$. The ring $\mathcal{K}_t(\mathscr{C}_\mathfrak{g}^\mathbb{Z})$ is endowed with the  \emph{bar involution} $\overline{(-)}$, which is the $\mathbb{Z}$-algebra anti-involution defined by
    \[ \overline{t^{1/2}}=t^{-1/2},\ \ \overline{\leftidx{^t}{Y}_{i,p}}=\leftidx{^t}{Y}_{i,p}.     \]
The dominant monomials $m$ parametrizes two bases of $\mathcal{K}_t(\mathscr{C}_\mathfrak{g}^{\mathbb{Z}})$: the \emph{$(q,t)$-characters of the simple modules }$(L_t(m))_m$ and the \emph{$(q,t)$-characters of the standard modules }$(M_t(m))_m$ (see \cite{Hernandez2004_alg_approach}).  For any dominant monomial $m=\prod_{k=1}^r Y_{i_k,p_k}$ with $p_1\le \cdots \leq p_r$, the $(q,t)$-character $M_t(m)$ is defined in \cite{HL_quantum_Groth_rings_derived_Hall} as a product
\begin{equation}\label{qtStd}
M_t(m)=t^{\alpha(m)} L_t(Y_{i_r,p_r})\cdots L_t(Y_{i_2,p_2})L_t(Y_{i_1,p_1})    
\end{equation}
for some power 
$\alpha(m)\in \Z[\frac{1}{2}]$, and, since the $(q,t)$-characters of the simple modules are invariant under the bar involution, $\overline{M_t(m)}$ is given by
\begin{equation}\label{eq:barQTStd}
\overline{M_t(m)}=t^{-\alpha(m)} L_t(Y_{i_1,p_1}) L_t(Y_{i_2,p_2})\cdots L_t(Y_{i_r,p_r}).
\end{equation}
Moreover, by \cite{Nakajima2004}, the bar-involution of $M_t(m)$ satisfies the following equation:
\begin{equation}\label{barInv}
    \overline{M_t(m)}=M_t(m)+\sum_{m'\in \sM^+,\ m'<m} U_{m,m'}M_t(m'),
\end{equation}
for some coefficients $U_{m,m'}$ in $\mathbb{Z}[t^{\pm 1/2}]$.

\subsection{Derived Hall algebras}
\label{subsection_derived_Hall_algebra}
In the following, we fix a prime number $q$ and  set $v=q^{\frac{1}{2}}$. Let $\mathbf{k}=\F_q$ be the finite field of cardinality $q$. 

Denote the set of isomorphism classes of objects in $D^b(Q)$ by $\mathrm{Iso}D^b(Q)$. Following \cite{Toen2006, Xiao-Xu2008, Sheng2010}, the \emph{derived Hall algebra} of $D^b(Q)$, denoted by $\sD\sH(Q)$, is the associative $\mathbb{Q}(v^{1/2})$-algebra whose underlying vector space has a basis indexed by $\mathrm{Iso}D^b(Q)$, and with multiplication given by 
$$u_Xu_Y =\sum_{Z\in \mathrm{Iso}D^b(Q)} F_{X,Y}^Z u_Z,$$
where $F_{X,Y}^Z$ is a non-negative coefficient, different from zero if and only if there is a triangle $X\to Z\to Y\to X[1]$. 
In \cite{HL_quantum_Groth_rings_derived_Hall} , Hernandez--Leclerc constructed an isomorphism 
\[ \Phi: \mathbb{Q}(v^{1/2})\otimes_{\mathbb{Z}[t^{\pm 1/2}]}\mathcal{K}_t(\mathscr{C}_\mathfrak{g}^{\mathbb{Z}}) \cong \sD\sH(Q),\qquad L_t(Y_{i,p})\mapsto v^{\frac{1}{2}}(v-v^{-1})u_{V(i,p)},\]
where the $\mathbb{Z}[t^{\pm 1/2}]$-module structure on $\mathbb{Q}(v^{1/2})$ is induced by the specialization $t^{1/2}\mapsto v^{1/2}$.
Notice that, under this isomorphism, the basis of the $(q,t)$-characters of the standard modules is mapped to the a rescaling of the basis $\{u_X|~X\in \mathrm{Iso}D^b(Q)\}$ of $\sD\sH(Q)$.

\subsection{Graded quiver varieties}
Following \cite{Keller_Scherotzke_2016,Nakajima2001}, we end this section by recalling some properties of graded quiver varieties. By \emph{vector space} we mean a finite-dimensional vector space over the field of complex numbers. Let $\widehat{I}'$ be the index set complementary to $\widehat{I}$ in $I\times\mathbb{Z}$.
For any  $\widehat{I}$-graded vector space $W=\bigoplus_{(i,p)\in \widehat{I}}W_{i,p}$, let $\mathfrak{M}^\bullet_0(W)$ be the associated graded quiver variety. It is an affine complex algebraic variety endowed with a natural stratification, whose strata, denoted $\mathfrak{M}_0^{\bullet\mathrm{reg}}(V,W)$, are indexed by $\widehat{I}'$-graded vector spaces $V$. For any $\widehat{I}'$-graded vector space $V$ and $\widehat{I}$-graded vector space $W$, let $m(V,W)$ be the dominant monomial defined by
\[
m(V,W)=\prod_{(j,s)\in \widehat{I}} Y^{\mathrm{dim}(W_{j,s})} \prod_{(i,p)\in \widehat{I}'} A_{i,p}^{-\mathrm{dim}(V_{i,p})}.
\]
Notice that $m(V,W)\leq m(0,W)$ with respect to Nakajima's order.
The following result describes the correspondence between Nakajima's order on dominant monomials and the degeneration order on strata:

\begin{prop}[{\cite[Prop. 4.3.13, Cor. 4.3.15]{Qin_tqchar_2014}}]
\label{prop_qin}
Let $V$ and $V'$ be $\widehat{I}'$-graded vector spaces and let $W$ be a $\widehat{I}$-graded vector space.
\begin{enumerate}
    \item the stratum $\mathfrak{M}_0^{\bullet\mathrm{reg}}(V,W)$ of $\mathfrak{M}_0^{\bullet}(W)$ is not-empty if and only if the monomial $m(V,W)$ is dominant.
    \item $\mathfrak{M}_0^{\bullet\mathrm{reg}}(V',W)\subset \overline{\mathfrak{M}_0^{\bullet\mathrm{reg}}(V,W)}$ if and only if
  \[ m(V,W)\leq m(V',W).\]
\end{enumerate}

\end{prop}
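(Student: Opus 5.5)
The plan is to reduce both statements to Nakajima's local structure theory of graded quiver varieties: the transversal slice theorem and the conic $\mathbb{C}^*$-action. Statement (2) then follows formally from statement (1).

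First, fix notation. For graded dimension vectors $w=\dim W$ and $v=\dim V$, let $C_q$ be the $q$-Cartan operator, so that
\[w-C_qv=\dim W^{\perp}\]
is exactly the exponent vector of $m(V,W)$. Indeed, $A_{i,p}$ contributes $Y_{i,p-1}Y_{i,p+1}\prod_{j\sim i}Y_{j,p}^{-1}$, which is the column of $C_q$ at $(i,p)$. So $m(V,W)$ is dominant if and only if $w-C_qv\ge 0$ componentwise. Since the $A_{i,p}$, $(i,p)\in\widehat{I}'$, are multiplicatively independent (they are the columns of an invertible operator), we also have
\[m(V,W)\le m(V',W)\iff v-v'\ge 0\]
componentwise.

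For (1), necessity: take a point $x\in\mathfrak{M}_0^{\bullet\mathrm{reg}}(V,W)$. By the transversal slice theorem, a neighbourhood of $x$ in $\mathfrak{M}_0^\bullet(W)$ is isomorphic to a neighbourhood of $(x,0)$ in $\mathfrak{M}_0^{\bullet\mathrm{reg}}(V,W)\times\mathfrak{M}_0^\bullet(W^\perp)$, where $W^\perp$ is a genuine graded vector space with $\dim W^\perp=w-C_qv$. Concretely, $W^\perp_{i,p}$ is the middle cohomology of the complex $V_{i,p-1}\to W_{i,p}\oplus\bigoplus_{j\sim i}V_{j,p}\to V_{i,p+1}$. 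At regular points this complex is injective on the left and surjective on the right, so its middle cohomology has nonnegative dimension $w_{i,p}-(C_qv)_{i,p}$, which gives dominance.

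For (1), sufficiency is the main obstacle. Given $w-C_qv\ge0$, one must produce a stable point with trivial stabilizer in the affine quotient. I would first try Nakajima's argument: the regular stratum is nonempty exactly when the nonsingular variety $\mathfrak{M}^\bullet(V,W)$ contains a point not lying over a smaller stratum. This is reduced, via Weyl group/reflection functors and the $\ell$-weight theory of standard modules, to the statement that every dominant $\ell$-weight $m\le m(0,W)$ occurs in the standard module $M(m(0,W))$ with the top cohomology of the corresponding fibre. If that route is too heavy, I would instead use the Keller--Scherotzke picture. There, points of $\mathfrak{M}_0^\bullet(W)$ are modules over the singular Nakajima category, and one can exhibit directly a module realizing $m(V,W)$, built from $D^b(Q)$ via Happel's Theorem \ref{Happel's Thm}, whose associated $V$ has the required dimension.

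For (2), use the contracting $\mathbb{C}^*$-action: $\mathfrak{M}_0^\bullet(W^\perp)$ is a cone with vertex $0$, so every nonempty stratum of it contains $0$ in its closure. Take $x$ in the stratum $(V',W)$. Under the slice isomorphism, the strata of $\mathfrak{M}_0^\bullet(W^\perp)$ indexed by $V''$ correspond near $x$ to the strata indexed by $V'\oplus V''$ of $\mathfrak{M}_0^\bullet(W)$. Hence $\mathfrak{M}_0^{\bullet\mathrm{reg}}(V',W)\subset\overline{\mathfrak{M}_0^{\bullet\mathrm{reg}}(V,W)}$ if and only if two conditions hold:
\begin{itemize}
\item $V''=V-V'$ has nonnegative dimension;
\item the stratum $\mathfrak{M}_0^{\bullet\mathrm{reg}}(V'',W^\perp)$ is nonempty.
\end{itemize}
Closure of a stratum is a union of strata, so it suffices to check this at one point $x$. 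By (1), the second condition says $m(V'',W^\perp)=m(V,W)$ is dominant. That holds automatically once the stratum $(V,W)$ is assumed nonempty, which is implicit in the statement. So the closure condition reduces to $v-v'\ge0$, which is $m(V,W)\le m(V',W)$ by the first paragraph.
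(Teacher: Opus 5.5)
The paper does not prove this proposition; it quotes it from Qin. So your outline has to stand on its own.

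The steps you actually carry out are sound:
\begin{enumerate}
\item The exponent vector of $m(V,W)$ is $w-C_qv$.
\item $m(V,W)\le m(V',W)$ holds iff $v\ge v'$. Here you need only injectivity of $v\mapsto C_qv$ on finitely supported vectors, not invertibility.
\item Necessity in (1) follows from injectivity and surjectivity of the two maps of your three-term complex at a stable and costable point.
\item The reduction of (2) to (1) through the graded transversal slice and the contracting $\mathbb{C}^*$-action is the standard argument.
\end{enumerate}

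\textbf{The gap: sufficiency in (1).} You flag this as the main obstacle but do not prove it. From $w-C_qv\ge 0$ you must produce a point of $\mu^{-1}(0)$ on $(V,W)$ whose $G_V$-orbit is closed and free.

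Your first route does not do this. That $m(V,W)$ occurs as a monomial of $\chi_q(M(m(0,W)))$ only tells you the central fibre $\mathfrak{L}^\bullet(V,W)=\pi^{-1}(0)$ is nonempty, i.e.\ $\mathfrak{M}^\bullet(V,W)\neq\emptyset$. A priori $\pi(\mathfrak{M}^\bullet(V,W))$ may lie entirely in smaller strata. What you would actually need is that $L(m(V,W))$ is a composition factor of $M(m(0,W))$. By Nakajima's decomposition-theorem formula, that is \emph{equivalent} to $\mathfrak{M}_0^{\bullet\mathrm{reg}}(V,W)\neq\emptyset$, so this reduction merely restates the claim.

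Your second route is only named, not executed. You would have to construct, for the \emph{given} $w$, a module over the singular Nakajima category of dimension vector exactly $w$ lying in the stratum indexed by $V$. You would also have to verify the dimension count, which is where $w-C_qv\ge 0$ must enter. Be careful not to use the identification of the set $\{\phi(x)\mid x\in\mathfrak{M}_0^\bullet(W)\}$ with $\{V(m)\mid m\le m(0,W)\}$: in this paper that identification is \emph{derived from} the proposition you are proving, so using it would be circular.

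\textbf{Consequence for (2).} The ``if'' direction of (2) uses (1) precisely to get $\mathfrak{M}_0^{\bullet\mathrm{reg}}(V-V',W^\perp)\neq\emptyset$ from dominance of $m(V,W)$, so it inherits the same gap. Only necessity in (1) and the ``only if'' direction of (2) are actually established.

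Finally, (2) requires \emph{both} strata to be nonempty. Take $V=0$ and $m(V',W)$ non-dominant: the left side holds vacuously, while $v\ge v'$ fails. State this hypothesis explicitly rather than leaving it implicit in the choice of $x$.
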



In \cite{Keller_Scherotzke_2016}, Keller--Scherotzke construct a categorical version $\mathcal{M}^\bullet_0(W)$ of Nakajima's graded quiver variety. They define a canonical functor $\mathcal{M}^\bullet_0(W)\rightarrow D^b(Q)$ that induces a map $\phi$ from $ \mathfrak{M}^\bullet_0(W)$ to the set of isoclasses of $D^b(Q)$.

\begin{lem}[{\cite[Lemma 4.14]{Keller_Scherotzke_2016}}]
\label{lem_KellerScheroktze_map}
For any $x$  in $\mathfrak{M}_0^{\bullet\mathrm{reg}}(V,W)$, we have
\[  \phi(x)\cong V(m(V,W)).  \]

\end{lem}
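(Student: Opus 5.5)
The plan is to work inside the Keller--Scherotzke framework. A point $x\in\mathfrak{M}_0^{\bullet}(W)$ corresponds to a module $M_x$ over the singular Nakajima category $\mathcal{S}$ with dimension vector $W$ on the frozen vertices $\widehat{I}$. The stratifying functor is the composite of the intermediate extension $K_{LR}$ to the regular Nakajima category $\mathcal{R}$ with the canonical functor to $D^b(Q)$. Concretely, $\phi(x)$ is the object $X$ of $D^b(Q)$ that represents the restriction of $K_{LR}(M_x)$ to the non-frozen vertices $\widehat{I}'$, viewed as a module over the mesh category $\mathbf{k}(\widehat{Q})$ via Theorem \ref{Happel's Thm}.

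I would pin down $\phi(x)$ by multiplicities rather than by an explicit construction. Since $Q$ is Dynkin, every object of $D^b(Q)$ is a finite direct sum of objects $V(i,p)$. It is therefore enough to show that, for every $(i,p)\in\widehat{I}$, the multiplicity of $V(i,p)$ in $\phi(x)$ equals the exponent of $Y_{i,p}$ in $m(V,W)$. By the definition of the $A_{i,p}$, this exponent is
\[
u_{i,p}=\dim W_{i,p}-\dim V_{i,p-1}-\dim V_{i,p+1}+\sum_{j\sim i}\dim V_{j,p}.
\]

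The key steps are as follows.

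\textbf{Step 1.} For $x$ in the regular stratum indexed by $V$, the $\widehat{I}'$-dimension vector of $K_{LR}(M_x)$ is exactly $\dim V$. This is the categorical meaning of "regular stratum" (stability together with costability of the lifted point), so I take it from the construction.

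\textbf{Step 2.} The value of $K_{LR}(M_x)$ at a vertex of $\widehat{I}'$ lying between $(i,p-1)$ and $(i,p+1)$ computes $\dim\mathrm{Hom}(U,\phi(x))$ for the corresponding indecomposable $U$. This lets me express the dimension vector $\dim V$ through the functions $U\mapsto\dim\mathrm{Hom}(U,\phi(x))$.

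\textbf{Step 3.} To extract the multiplicity of $V(i,p)$, apply $\mathrm{Hom}(-,\phi(x))$ to the Auslander--Reiten triangle ending or starting at $V(i,p)$, which is the mesh at $(i,p)$. The standard AR-theoretic formula says that the alternating sum of Hom-dimensions over the mesh equals the multiplicity of $V(i,p)$. The frozen vertex $(i,p)$ contributes $\dim W_{i,p}$, because the frozen component of $K_{LR}(M_x)$ is $M_x$ itself. Comparing the two sides yields $u_{i,p}$.

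\textbf{Step 4.} Since $u_{i,p}\geq 0$ for all $(i,p)$, the monomial $m(V,W)$ is dominant, consistently with Proposition \ref{prop_qin}. By Krull--Schmidt we then conclude that $\phi(x)\cong V(m(V,W))$.

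I expect Step 3 to be the main obstacle. The difficulty is matching precisely the exact sequences of $\mathcal{R}$-modules at the frozen vertex with the Auslander--Reiten triangles, in particular getting the signs and shifts right: the indexing of $A_{i,p}$ at vertices of $\widehat{I}'$ against the mesh at $(i,p)\in\widehat{I}$. I would first check this on the single-vertex case $W=\mathbf{k}$ at one frozen vertex, and then argue by additivity.
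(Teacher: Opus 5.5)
Your proposal has a genuine gap. The paper gives no proof of its own here; it simply quotes Keller--Scherotzke. Your argument therefore has to stand on its own, and it does not: the description of $\phi$ in your first paragraph is wrong, and Step~2 is false.

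To see this, take $x=0\in\mathfrak{M}_0^{\bullet}(W)$ with $W\neq 0$. This point lies in the stratum $V=0$, so by your Step~1 the module $K_{LR}(M_0)$ vanishes on $\widehat{I}'$. Your description of $\phi$ (or Step~2) then forces $\Hom(U,\phi(0))=0$ for every indecomposable $U$, i.e.\ $\phi(0)=0$. But the lemma asserts $\phi(0)\cong V(m(0,W))=\bigoplus_{(i,p)}V(i,p)^{\oplus\dim W_{i,p}}\neq 0$.

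The failure also goes the other way. Take $Q=A_1$ with $\widehat{\varepsilon}_1=0$, and $W$ one-dimensional at $(1,0)$ and $(1,2)$. Let $V$ be one-dimensional at $(1,1)$. Then $m(V,W)=Y_{1,0}Y_{1,2}A_{1,1}^{-1}=1$ is dominant, so this stratum is non-empty and $\phi(x)=0$ on it, although $K_{LR}(M_x)$ is nonzero at $(1,1)$. So $\dim V$ is not a function of $\phi(x)$, and no identity of the form $\dim V_y=\dim\Hom(U_y,\phi(x))$ can hold.

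Step~3 also conflates two different meshes. An Auslander--Reiten triangle of $D^b(Q)$ has no frozen term, so applying $\Hom(-,\phi(x))$ to it can never produce $\dim W_{i,p}$. Moreover, in a triangulated category the alternating sum over $\tau Y\to E\to Y\to\tau Y[1]$ gives the multiplicity of $\tau Y$ plus that of $\tau Y[1]$, not a single multiplicity.

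The $W$-term, and indeed the whole formula for $u_{i,p}$, comes instead from the modified mesh of the regular Nakajima category at the frozen vertex. Put $L=K_{LR}(M_x)$. The mesh relation at the non-frozen vertex $(i,p-1)$ passes through the frozen vertex $(i,p)$ and gives a complex
\[
L(i,p-1)\longrightarrow L(i,p)\oplus\bigoplus_{j\sim i}L(j,p)\longrightarrow L(i,p+1).
\]
The kernel of the first map is a submodule concentrated at a non-frozen vertex. The cokernel of the second map is a quotient concentrated at a non-frozen vertex. Hence stability and costability of $L$ make this complex exact at both ends. By Step~1, its middle cohomology then has dimension
\[
\dim W_{i,p}+\sum_{j\sim i}\dim V_{j,p}-\dim V_{i,p-1}-\dim V_{i,p+1}=u_{i,p}.
\]
What remains is the real content of Keller--Scherotzke's lemma: this middle cohomology must equal the multiplicity of $V(i,p)$ in $\phi(x)$. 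Proving that requires their actual construction of the stratifying functor, not the restriction of $K_{LR}(M_x)$ to $\widehat{I}'$.

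For $Q=A_1$ you can see the mechanism directly. Here $\mathcal{S}$-modules are complexes and $\phi$ records their cohomology. The displayed complex is $\mathrm{im}(W_{1,p-2}\to W_{1,p})\hookrightarrow W_{1,p}\twoheadrightarrow\mathrm{im}(W_{1,p}\to W_{1,p+2})$, which computes the cohomology at $(1,p)$, so the lemma reduces to rank--nullity.
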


Moreover, Keller-Scherotzke show that, under the map $\phi$, the degeneration of strata corresponds to the degeneration order in the derived category.

\begin{prop}[{\cite[Theorem 2.8]{Keller_Scherotzke_2016}}]
\label{pro_KelScher_bijection}
Let $W$ be a $\widehat{I}$-graded vector space and let $V$ and $V'$ be  $\widehat{I}'$-graded vector spaces such that the regular strata $\mathfrak{M}_0^{\bullet\mathrm{reg}}(V,W)$ and  $\mathfrak{M}_0^{\bullet\mathrm{reg}}(V',W)$ of $\mathfrak{M}_0^{\bullet}(W)$ are not empty. Let $m=m(V,W)$ and $m'=m(V',W)$. Then \[ \mathfrak{M}_0^{\bullet\mathrm{reg}}(V',W)\subset \overline{\mathfrak{M}_0^{\bullet\mathrm{reg}}(V,W)}\quad \text{if and only if}\quad V(m)\leq_{\Delta} V(m')\].
\end{prop}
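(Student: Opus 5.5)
The plan is to prove the two implications separately. Both go through Keller--Scherotzke's categorical model: the category $\mathcal{M}^\bullet_0(W)$ and its stratifying functor to $D^b(Q)$, which sends a point $x$ of the stratum $\mathfrak{M}_0^{\bullet\mathrm{reg}}(V,W)$ to $V(m(V,W))$ (Lemma~\ref{lem_KellerScheroktze_map}). The key point is that the stratifying functor factors through the stable category of a Frobenius category $\mathcal{E}$. This $\mathcal{E}$ is the category of suitable finite-dimensional modules over the regular Nakajima category. Its projective-injective objects are the modules attached to vertices of $\widehat{I}$, so they record exactly the datum $W$. Its stable category is equivalent to $D^b(Q)$, and under this equivalence conflations in $\mathcal{E}$ become triangles. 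Points of $\mathfrak{M}_0^\bullet(W)$ correspond to objects of $\mathcal{E}$ with fixed $W$-part, and strata correspond to isoclasses modulo projective-injective summands.

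\emph{From geometry to $\le_\Delta$.} Suppose $\mathfrak{M}_0^{\bullet\mathrm{reg}}(V',W)\subset \overline{\mathfrak{M}_0^{\bullet\mathrm{reg}}(V,W)}$. I would pass from the affine quotient to a representation variety over the Nakajima category with fixed dimension vector, in which orbit closures map onto closures of strata. There, Zwara's characterisation of degenerations applies: a module $M$ degenerates to $N$ if and only if there is an exact sequence $0\to Z\to Z\oplus M\to N\to 0$. Such a sequence is a conflation in $\mathcal{E}$, so the stratifying functor turns it into a triangle $V(m')\to V(m)\oplus Z'\to Z'\to V(m')[1]$. Here I use Lemma~\ref{lem_KellerScheroktze_map} to identify the images of the two endpoints. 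By definition this triangle gives $V(m)\le_\Delta V(m')$.

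\emph{From $\le_\Delta$ to geometry.} Given a triangle $V(m')\to V(m)\oplus Z\to Z\to V(m')[1]$, I would lift it to a conflation $0\to\tilde Y\to \tilde X\oplus\tilde Z\to\tilde Z\to 0$ in $\mathcal{E}$, using the fact that every triangle in a stable category is isomorphic to the image of a conflation. The lifts agree with the given objects only up to projective-injective summands. So I then add projective-injective summands, and possibly use the sequence $0\to P\to P\oplus P'\to P'\to 0$, to arrange that $\tilde X$ and $\tilde Y$ have exactly the $W$-part prescribed by the strata. Zwara's theorem then turns this sequence back into a degeneration of modules, hence of points, which gives the inclusion of closures.

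The main obstacle is this last normalisation. One has to check that the adjusted lifts $\tilde X$ and $\tilde Y$ really lie over the \emph{same} $W$, and that they represent the strata $V$ and $V'$ rather than some other strata with the same stable image. For this I would use the equality $\dim$-vector $=(V,W)$ together with the injectivity of $V(-)$ on dominant monomials. As a consistency check, Proposition~\ref{prop_qin} then shows that both conditions are equivalent to $m\le m'$ in Nakajima's order.
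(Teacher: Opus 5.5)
The paper gives no proof of this statement; it is quoted from Keller--Scherotzke. Your sketch therefore has to stand on its own, and the first direction has a genuine gap that comes from an inaccurate picture of the geometry.

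\textbf{First direction (closure inclusion implies $\le_\Delta$).} The points of $\mathfrak{M}_0^{\bullet}(W)$ are \emph{arbitrary} finite-dimensional representations of the singular Nakajima category $\mathcal{S}$ of dimension vector $W$, not objects of the Frobenius category. The strata are the fibres of the stratifying functor on all of these representations. In general they are not finite unions of orbits, although your description ``isoclasses modulo projective-injective summands'' would force them to be. Consequently the step ``orbit closures map onto closures of strata'' fails, and Zwara's theorem, which is about orbit closures, has nothing to act on.

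This already happens in type $D_4$. Orient all arrows towards the central vertex $c$, so that $\dim\Ext^1(I_c,P_c)=\dim\Hom(P_c,\tau I_c)=2$, and take $W$ with $V(m(0,W))\cong P_c\oplus I_c$.
\begin{itemize}
\item $\mathfrak{M}_0^{\bullet}(W)$ is the space of representations of $\mathcal{S}$ with one-dimensional spaces at two frozen vertices. Hence it is a vector space on which $G_W=(\mathbb{C}^*)^2$ acts through scalars.
\item By Lemma~\ref{lem:Triangle2Order}, Proposition~\ref{prop_qin} and Lemma~\ref{lem_KellerScheroktze_map}, it has, besides $\{0\}$, at least two strata. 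They are labelled by the middle terms of non-split extensions of $I_c$ by $P_c$: the indecomposable of dimension vector $(1,1,1;2)$, and $\tau^{-1}P_1\oplus P_1$ for a leaf $1$. So the space has dimension at least $2$.
\item The open stratum is therefore a union of infinitely many punctured lines. Any other non-zero stratum lies in its closure, yet meets none of its orbit closures $\mathbb{C}u$.
\end{itemize}
Passing to representations of the regular Nakajima category does not help, since the quotient map sends an orbit closure onto the closure of a single $G_W$-orbit. What is missing is an argument for degenerations along curves whose generic point moves inside the stratum. That is a different and harder statement than Zwara's.

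Two smaller points in this direction:
\begin{itemize}
\item The exact sequence given by Zwara consists of arbitrary $\mathcal{S}$-modules, not a conflation of $\mathcal{E}$. You therefore need the stratifying functor to turn all short exact sequences of finite-dimensional $\mathcal{S}$-modules into triangles.
\item To get a triangle of the shape used in the definition of $\le_\Delta$, you need the dual form $0\to N\to M\oplus Z\to Z\to 0$ of Zwara's theorem.
\end{itemize}

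\textbf{Converse direction.} The normalisation you flag is a real obstacle, and injectivity of $V(-)$ does not remove it.
\begin{itemize}
\item Lifting $V(m')\to V(m)\oplus Z\to Z\to V(m')[1]$ to a conflation in general forces extra projective-injective summands: an injective hull of the first term, and a projective summand of the third.
\item Adding projective-injectives only enlarges the dimension vector.
\item Nothing guarantees that the strata of $\mathfrak{M}_0^{\bullet}(W)$ contain objects of $\mathcal{E}$ at all.
\end{itemize}
So Riedtmann's (easy) direction only yields an inclusion of stratum closures in some larger $\mathfrak{M}_0^{\bullet}(W')$. To come back to $W$, you need the fact that the closure order depends only on the monomials. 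That is, you need Proposition~\ref{prop_qin} applied in $W'$ and in $W$, together with Lemma~\ref{lem_KellerScheroktze_map}. You invoke that proposition only as a consistency check, but here it is the essential step.
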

Hence, combining the above result with Proposition \ref{prop_qin}, for any dominant monomial $m$, we have an embedding of partially ordered sets
 \[V(-):\{n\in \sM^+\ |\ n\le m\}\hookrightarrow \{N\in \mathrm{Iso}D^b(Q)\ |\ N\le_{\Delta}V(m)\}.\]
In the next section, we will show that this is actually an isomorphism.

\section{ Correspondence between partial orders}\label{sec:degenerations}
Recall that $Q$ is a Dynkin quiver.
We start this section by studying the degeneration order in the derived category $D^b(Q)$.

\subsection{Chains of ext-degenerations}

 For any $X\le_{\Delta}Y$ in $D^b(Q)$ with a triangle 
 $$Y\to X\oplus Z\to Z\to Y[1],$$
 we first consider the case where $Z$ is indecomposable. The following lemma states that, in this case, $X\le_{\Delta}Y$ is induced by an ext-degeneration.
 
\begin{lem}
\label{lem_degeneration_with_Z_indec} 
  For any $X,Y\in D^b(A)$, if there exists an indecomposable object $Z$ of $D^b(A)$ such that there  exists a triangle
    \[ Y \xrightarrow{f} X\oplus Z\xrightarrow{g} Z\rightarrow Y[1].\]
 then $X\cong Y$ or there exists an object $Y'$ in $D^b(A)$ and a triangle 
        \[ Y'\rightarrow X \rightarrow Z\rightarrow Y'[1],\]
    such that $Y\cong Y'\oplus Z$.
\end{lem}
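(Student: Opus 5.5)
Write $g=(g_1,g_2)\colon X\oplus Z\to Z$ with $g_1\colon X\to Z$ and $g_2\in\mathrm{End}(Z)$. Since $Z$ is indecomposable in a Krull--Schmidt category (here $D^b(A)$ with $A$ of finite representation type over a field, in our case $A=\kQ$), $\mathrm{End}(Z)$ is local. So either $g_2$ is an isomorphism or $g_2$ is nilpotent, i.e.\ lies in the radical. I would split the proof according to these two cases.

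\emph{Case 1: $g_2$ is an isomorphism.} Then $g$ is a split epimorphism, so the connecting morphism $Z\to Y[1]$ vanishes. The triangle therefore splits and gives $X\oplus Z\cong Y\oplus Z$. By Krull--Schmidt cancellation, $X\cong Y$.

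\emph{Case 2: $g_2$ is nilpotent.} I would first replace $g$ by a morphism with $g_2=0$, and then read off the desired triangle.
\begin{enumerate}
\item Precomposing $g$ with an automorphism of $X\oplus Z$ only changes the triangle up to isomorphism. Using the automorphism $\begin{pmatrix}1&0\\ h&1\end{pmatrix}$ changes the components to $(g_1+g_2h,\,g_2)$, which does not remove $g_2$.
\item Instead, I would iterate the triangle. Since $g_2^N=0$, an octahedral-axiom argument should produce a modified triangle in which the $Z\to Z$ component is zero. Concretely, consider $Z\xrightarrow{(0,1)} X\oplus Z\xrightarrow{g} Z$, whose composite is $g_2$. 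The octahedral axiom applied to this composite relates $\mathrm{Cone}(g_2)$, $\mathrm{Cone}(g)=Y[1]$ and $\mathrm{Cone}((0,1))=X$. This gives a triangle
\[ X\to \mathrm{Cone}(g_2)\to Y[1]\to X[1], \]
though it is unclear whether this helps directly.
\item An alternative and cleaner route is to use the component $g_1\colon X\to Z$ and complete it to a triangle $Y'\to X\xrightarrow{g_1} Z\to Y'[1]$. The goal is then to show $Y\cong Y'\oplus Z$. I would compare $g=(g_1,g_2)$ with $(g_1,0)$, whose cocone is $Y'\oplus Z$, and I expect to prove that these two cocones are isomorphic because $g_2$ is radical. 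This should go through a Fitting or iteration argument, or through a counting or dimension argument: in the Dynkin case, objects are determined by the dimensions $\dim\Hom(U,-)$ for indecomposables $U$, and each such dimension is computed from the long exact sequences for the two triangles.
\end{enumerate}

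For the $\Hom$-dimension approach, apply $\Hom(U,-)$ to both triangles. The dimension of $\Hom(U,Y)$ equals $\dim\Hom(U,X\oplus Z)$ minus the ranks of $g_*$ and of the shifted map; the analogous formula holds for $Y'\oplus Z$ with $(g_1,0)_*$. So one must show $\mathrm{rk}(g_1,g_2)_*=\mathrm{rk}(g_1,0)_*$ on $\Hom(U,X)\oplus\Hom(U,Z)$, and similarly after shifting.

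The main obstacle is this rank equality. It is not automatic for a general nilpotent $g_2$. I would look for an automorphism $\alpha$ of $X\oplus Z$ and an automorphism $\beta$ of $Z$ such that $\beta g\alpha=(g_1,0)$. Postcomposing with $\beta=1-g_2$ (invertible since $g_2$ is nilpotent) together with a suitable correction through $X$ might kill $g_2$ when $g_2$ factors through $g_1$. Otherwise, the octahedral analysis in step 2 is the fallback: it should show that the triangle is degenerate only if $X\cong Y$.
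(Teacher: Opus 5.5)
\textbf{There is a genuine gap in your Case 2.}

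Your Case 1 is correct and is the paper's argument: if $g_2$ is invertible, then $g$ is a retraction, the triangle splits, and Krull--Schmidt cancellation gives $X\cong Y$.

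In Case 2 you never prove that $\mathrm{cocone}(g_1,g_2)\cong \mathrm{cocone}(g_1)\oplus Z$ when $g_2$ is radical. The octahedral triangle, the rank equality, and the correction by $1-g_2$ are all left as expectations. Worse, no argument that uses only the locality of $\mathrm{End}(Z)$ can succeed, because the statement is false at that level of generality. Take $X=0$ and let $Z$ be an indecomposable module with a nonzero nilpotent endomorphism $g_2$. Examples are $Z=\mathbf{k}[x]/(x^2)$ with $g_2$ multiplication by $x$, or, even for a hereditary algebra, a module of quasi-length $2$ in a homogeneous tube of the Kronecker algebra. The triangle $Y\to Z\xrightarrow{g_2} Z\to Y[1]$ gives $H^0(Y)\cong \ker g_2$, which is a nonzero proper submodule of $Z$, so $Y\not\cong 0=X$. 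The only possible $Y'$ is $Z[-1]$, since the triangle $Y'\to 0\to Z\to Y'[1]$ forces $Y'\cong Z[-1]$. But $H^0(Z[-1]\oplus Z)=Z\not\cong H^0(Y)$, so $Y\not\cong Y'\oplus Z$. So the rank equality you single out as the main obstacle really fails for nonzero nilpotent $g_2$, and the Dynkin hypothesis has to enter at exactly this point.

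\textbf{The missing idea.} For $Q$ Dynkin, every indecomposable object of $D^b(Q)$ is a shift of an indecomposable $\mathbf{k}Q$-module, and these modules are bricks. Hence $\mathrm{End}(Z)\cong \mathbf{k}$, which is what the paper uses. This ring has zero radical, so your Case 2 is simply $g_2=0$. Then $g=(g_1,0)$ is the direct sum of $g_1\colon X\to Z$ and $Z\to 0$, so $\mathrm{cone}(g)\cong \mathrm{cone}(g_1)\oplus Z[1]$. Setting $Y'=\mathrm{cone}(g_1)[-1]$ gives $Y\cong Y'\oplus Z$ together with the required triangle $Y'\to X\xrightarrow{g_1} Z\to Y'[1]$. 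With this observation your three sub-steps (octahedron, Hom-dimension count, correction by automorphisms) become unnecessary: the two maps you wanted to compare coincide, so the rank equality holds trivially.
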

\begin{proof}
    Write $g=(g_X,g_Z)$ for the components of $g$. Since $Z$ in indecomposable, we have $g_Z\in \mathrm{End}_{D^b(A)}\cong \mathbf{k}$.\\
    If $g_Z=0$, then $\mathrm{cone}(g)\cong Y[1]$ is isomorphic to the direct sum $\mathrm{cone}(g_X)\oplus Z[1]$. Therefore, we obtain the statement by setting $Y'=\mathrm{cone}(g_X)[-1]$.\\
    If $g_Z\neq 0$, the it is a scalar multiple of $Id_Z$. Therefore, $g$ is a retraction. If follows directly that $Y\oplus Z$ is isomorphic to the direct sum $X\oplus Z$ and, therefore, $X\cong Y$.
\end{proof}

For general $Z$, to show that $X\le_{\Delta}Y$ can be decomposed into a chain of ext-degenerations, we need the following lemma. It is a simple application of the octahedral axiom.
\begin{lem}
\label{lem_tring_cat_axioms}
    For any triangle $Y\rightarrow X\rightarrow Z_1\oplus Z_2\xrightarrow{(h_1,h_2)} Y[1]$ in $D^b(Q)$, there exist:
    \begin{enumerate}
        \item triangles
              $Y\rightarrow X_i \rightarrow Z_i\xrightarrow{h_i} Y[1]$, for $i=1,2$;
        \item triangles $X_i\rightarrow X \rightarrow Z_{\overline{i}} \rightarrow X_i[1]$, for $i=1,2$ and $\overline{i}\in \{1,2\}\backslash \{i\}$.
        \end{enumerate}
\end{lem}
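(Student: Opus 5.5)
Part (1) comes from completing composites of $h$ with the inclusions of the summands to triangles, and part (2) from the octahedral axiom applied to those inclusions.

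\emph{Step 1 (the triangles in (1)).} Write $\iota_i\colon Z_i\to Z_1\oplus Z_2$ for the canonical inclusions, so that $h_i=h\circ\iota_i$ where $h=(h_1,h_2)$. For $i=1,2$, complete $h_i\colon Z_i\to Y[1]$ to a triangle and rotate it. Defining $X_i$ as the cocone of $h_i$, shifted so that it reads
\[ Y\rightarrow X_i\rightarrow Z_i\xrightarrow{h_i} Y[1],\]
gives (1) directly from the axioms.

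\emph{Step 2 (the octahedron).} For (2), apply the octahedral axiom to the composable pair
\[ Z_i\xrightarrow{\iota_i} Z_1\oplus Z_2\xrightarrow{h} Y[1], \qquad h\circ\iota_i=h_i. \]
The three triangles entering the octahedron are the following.
\begin{itemize}
\item The split triangle $Z_i\xrightarrow{\iota_i} Z_1\oplus Z_2\xrightarrow{\pi_{\overline{i}}} Z_{\overline{i}}\xrightarrow{0} Z_i[1]$; the cone of $\iota_i$ is $Z_{\overline{i}}$.
\item The triangle on $h$, obtained by rotating the given one to $Z_1\oplus Z_2\xrightarrow{h} Y[1]\to X[1]\to (Z_1\oplus Z_2)[1]$; the cone of $h$ is $X[1]$.
\item The triangle on $h_i$, namely the rotation $Z_i\xrightarrow{h_i} Y[1]\to X_i[1]\to Z_i[1]$ of the triangle from Step 1; the cone of $h_i$ is $X_i[1]$.
\end{itemize}
The octahedral axiom then yields a triangle
\[ Z_{\overline{i}}\rightarrow X_i[1]\rightarrow X[1]\rightarrow Z_{\overline{i}}[1]. \]
Rotating and shifting by $[-1]$ gives
\[ X_i\rightarrow X\rightarrow Z_{\overline{i}}\rightarrow X_i[1], \]
which is the triangle in (2).

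\emph{Main obstacle.} The only real care needed is keeping track of shifts and rotations so that the objects land in the stated positions, in particular that the cone of $h$ is $X[1]$ and the cone of $h_i$ is $X_i[1]$. Signs on the morphisms are irrelevant, since the statement only asserts the existence of triangles with these vertices. It may also be convenient to record that the composite $Y\to X_i\to X$ equals the given map $Y\to X$; this compatibility is part of the octahedral output and could be useful in later lemmas.
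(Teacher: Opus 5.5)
Your proposal is correct and follows the same route as the paper. You define $X_i=\mathrm{cone}(h_i)[-1]$ and apply the octahedral axiom to $h_i=h\circ\iota_i$, using $\mathrm{cone}(\iota_i)\cong Z_{\overline{i}}$, $\mathrm{cone}(h_i)\cong X_i[1]$ and $\mathrm{cone}(h)\cong X[1]$, to get $Z_{\overline{i}}\to X_i[1]\to X[1]\to Z_{\overline{i}}[1]$, which after rotation and shift is the triangle in (2). Your bookkeeping of the shifts is more explicit than the paper's, and the compatibility you note (the composite $Y\to X_i\to X$ is the given map $Y\to X$, up to the rotation signs) is indeed part of the octahedral output, though the statement does not need it.
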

\begin{proof}
    The triangles in $(1)$ are constructed by setting $X_i=\mathrm{cone}(h_i)[-1]$. 
    For the triangles in $(2)$,
    , let $\iota_i$ be the inclusion $Z_i\rightarrow Z_1\oplus Z_2$ and apply the octahedron axiom to the composition $h_i=h\circ \iota_i$. Therefore, we obtain a triangle
     \[ \mathrm{cone}(\iota_i)\rightarrow \mathrm{cone}(h'_i)\rightarrow \mathrm{cone}(h)\rightarrow \mathrm{cone}(\iota_i)[1],\]
     which is isomorphic to the triangle
     \[ Z_{\overline{i}}\rightarrow X_i[1]\xrightarrow{t} X[1]\rightarrow Z_{\overline{i}}[1],\]
     which, up to taking shifts, is the desired triangle.
\end{proof}

Now we can generalize Lemma \ref{lem_degeneration_with_Z_indec} to any degeneration $X\leq_\Delta Y$. 

\begin{prop}\label{prop:DegOrder}
 Let $X$ and $Y$ be objects in $D^b(Q)$ such that $X\leq_{\Delta} Y$. Then there exists a positive integer $s$ and a sequence of ext-degenerations 
 \[ X\cong M_1 \leq_{\mathrm{ext}} M_2\leq_{\mathrm{ext}}\dots \leq_{\mathrm{ext}} M_{s-1} \leq_{\mathrm{ext}} M_s \cong Y.\]
\end{prop}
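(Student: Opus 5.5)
The plan is to prove a slightly stronger statement, by induction on the number $r$ of indecomposable summands of $Z$:
\begin{quote}
whenever there is a triangle $Y\xrightarrow{f} X\oplus Z\xrightarrow{g} Z\to Y[1]$, there is a finite chain of ext-degenerations from $X$ to $Y$.
\end{quote}
If $r=0$, then $f$ is an isomorphism and $X\cong Y$, so the chain has length one. For $r\ge 1$ we write $g=(g_X,g_Z)$ and decompose $Z=\bigoplus_a Z_a$ into indecomposables. We view $g_Z$ as a matrix $(g_{ba})$ with entries $g_{ba}\colon Z_a\to Z_b$.

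\emph{Step 1: reduce to the case where $g_Z$ is radical.} Suppose some entry $g_{ba}$ is an isomorphism. We would then perform Gaussian elimination, that is, change bases in the source and target by automorphisms, to rewrite $g$ as a direct sum of $\mathrm{id}\colon Z_a\to Z_b$ with a map $g'\colon X\oplus Z'\to Z''$. Here $Z'$ and $Z''$ are the complements of $Z_a$ and $Z_b$, and $Z'\cong Z''$ by Krull--Schmidt, since $Z_a\cong Z_b$. The cone of the identity summand vanishes, so we get a triangle $Y\to X\oplus Z'\to Z'\to Y[1]$ with fewer summands, and induction applies. This is the same idea as the case $g_Z\neq 0$ in Lemma \ref{lem_degeneration_with_Z_indec}. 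We may therefore assume that every entry $g_{ba}$ is a non-isomorphism between indecomposables.

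\emph{Step 2: find a summand killed by $g_Z$.} Since $Q$ is Dynkin, $\mathrm{Ind}\,D^b(Q)$ is directed (Theorem \ref{Happel's Thm}): there are no cycles of nonzero non-invertible maps, and $\mathrm{End}(Z_a)=\mathbf{k}$. We choose $Z_1:=Z_a$ maximal among the summands for this order. Then every non-invertible map from $Z_1$ to any $Z_b$ is zero, including the radical endomorphisms of $Z_1$ itself. Hence $g_Z|_{Z_1}=0$.

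Write $Z=Z_1\oplus Z_2$. Then $g$ restricted to the summand $Z_1$ of $X\oplus Z_2\oplus Z_1$ is zero, so the cone splits off $Z_1[1]$. This gives $Y\cong Y''\oplus Z_1$ together with a triangle
\[ Y''\to X\oplus Z_2\to Z_1\oplus Z_2\xrightarrow{(h_1,h_2)} Y''[1]. \]

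\emph{Step 3: octahedron and induction.} Apply Lemma \ref{lem_tring_cat_axioms} to this triangle with $i=1$. It yields an object $X_1$ and two triangles:
\[ Y''\to X_1\to Z_1\to Y''[1] \qquad\text{and}\qquad X_1\to X\oplus Z_2\to Z_2\to X_1[1]. \]
The second triangle is of degeneration form with $Z_2$, which has $r-1$ summands. By induction it gives a chain of ext-degenerations from $X$ to $X_1$. The first triangle says $X_1\le_{\mathrm{ext}} Y''\oplus Z_1\cong Y$. Concatenating the two gives the required chain from $X$ to $Y$, and the Proposition follows.

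The main obstacle is Step 1. The Gaussian elimination must be done carefully: we must check that after the change of bases the triangle really becomes a direct sum of a trivial triangle with one of the same shape. In particular, the extra summand should be removed simultaneously from the middle term $X\oplus Z$ and from the third term $Z$, leaving the same $Z'$ in both. We must also check that the directedness argument in Step 2 really forces the whole column $g_Z|_{Z_1}$ to vanish, including for repeated isomorphic summands.
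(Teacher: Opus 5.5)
Your proof is correct, but it is organized differently from the paper's.

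\textbf{The paper's route.} The paper inducts on the number $r$ of \emph{distinct} indecomposable summands of $Z$, and inside that on the multiplicity $a_1$. It orders the summands so that $\mathrm{Hom}(Z_i,Z_j)=0$ for $i>j$. This makes $Z_1$ Hom-\emph{minimal}, so the maps from the other summands \emph{into} $Z_1$ vanish. Lemma \ref{lem_tring_cat_axioms} then splits the triangle into $Y\to Y'\to Z'$ and $Y'\to X\oplus Z\to Z_1^{a_1}$. The isotypic block $Z_1^{a_1}$ is treated by repeatedly invoking the indecomposable case, Lemma \ref{lem_degeneration_with_Z_indec}, with a case split at each stage. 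The vanishing of $\mathrm{Hom}(Z',Z_1)$ is used to split $Z'$ off. As a result, the paper's chain \emph{begins} with the ext-degeneration $X\le_{\mathrm{ext}}\widetilde X\oplus Z_1$, and the induction runs from there to $Y$.

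\textbf{Your route.} You induct once, on the total number of summands counted with multiplicity. Your dichotomy (some entry $g_{ba}$ invertible, or all entries non-invertible) is the matrix version of the two cases of Lemma \ref{lem_degeneration_with_Z_indec}. It removes the multiplicity bookkeeping in one stroke. Choosing a Hom-\emph{maximal} $Z_1$ kills the maps \emph{out of} $Z_1$, so $Z_1$ splits off $Y$ directly. A single application of Lemma \ref{lem_tring_cat_axioms} then gives $X\le_{\Delta}X_1$ (witnessed by $Z_2$) followed by $X_1\le_{\mathrm{ext}}Y$. Your chain therefore \emph{ends} with the new ext-degeneration.

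\textbf{Your two concerns.} Neither is a problem.
\begin{itemize}
\item Write $g=\bigl(\begin{smallmatrix}\alpha&\beta\\ \gamma&\delta\end{smallmatrix}\bigr)$ with respect to the source $(X\oplus Z')\oplus Z_a$ and target $Z''\oplus Z_b$, with $\delta=g_{ba}$ invertible. Composing with elementary automorphisms of source and target turns $g$ into $(\alpha-\beta\delta^{-1}\gamma)\oplus\delta$. Isomorphic morphisms have isomorphic cones, and $\mathrm{cone}(\delta)=0$, so the summand is removed from the middle and third terms simultaneously.
\item For a repeated summand $Z_b\cong Z_1$ you have $\mathrm{Hom}(Z_1,Z_b)\cong\mathbf{k}$. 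So a non-invertible entry from $Z_1$ to $Z_b$ is zero, and the whole column $g_Z|_{Z_1}$ vanishes.
\end{itemize}

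Both arguments use the same inputs: Krull--Schmidt, $\mathrm{End}(Z_a)=\mathbf{k}$, and directedness of $\mathrm{Ind}\,D^b(Q)$. Yours is shorter and more transparent than the paper's nested induction. The paper's version avoids explicit change-of-basis manipulations, at the cost of the case analysis.
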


\begin{proof}
    Since $X\leq_{\Delta} Y$, there exists an object $Z$ in $D^b(Q)$ and a triangle
     \begin{equation}    \label{eq_triangle_Z_proof_proposition_chain_degenerations}
     Y\rightarrow X\oplus Z\rightarrow Z\xrightarrow{h} Y[1].
     \end{equation} 
     Write $Z=\bigoplus_{1\leq i\leq r} Z_i^{a_i}$ for the direct sum decomposition of $Z$ into indecomposable summands. Up to reordering, we can assume that $\mathrm{Hom}_{D^b(Q)}(Z_i,Z_j)=0$ whenever $i>j$. We proceed by induction on $r$. When $r=1$, the triangle (\ref{eq_triangle_Z_proof_proposition_chain_degenerations}) is of the form
     \begin{equation}
     \label{eq_triangle_Z1_proof_proposition_chain_degenerations}
         Y\rightarrow X\oplus Z_1^{a_1}\rightarrow Z_1^{a_1}\xrightarrow{h} Y[1].
     \end{equation}
     We proceed by induction on $a_1$. If $a_1=1$, the statement is a reformulation of Lemma \ref{lem_degeneration_with_Z_indec}. 
     Suppose that $a_1\geq 2$. Then by applying Lemma \ref{lem_tring_cat_axioms} to the triangle (\ref{eq_triangle_Z1_proof_proposition_chain_degenerations}), we obtain triangles
     \[ Y\rightarrow \overline{X} \rightarrow Z_1^{a_1-1}\rightarrow Y[1] \quad \text{ and } \quad 
     \overline{X}\rightarrow X\oplus Z_1^{a_1} \rightarrow Z_1\rightarrow \overline{X}[1].\]
     By applying Lemma \ref{lem_degeneration_with_Z_indec} to the latter triangle, we have $\overline{X}\cong X\oplus Z_1^{a_1-1}$ or there exist an object $X_1$ in $D^b(Q)$ such that $\overline{X}\cong X_1\oplus Z_1$ and a triangle 
     \begin{equation}
     \label{eq_triangle_X1_proof_proposition_chain_degenerations}
         X_1\rightarrow X\oplus Z_1^{a_1-1}\rightarrow Z_1\rightarrow X_1[1].
         \end{equation}
     If $\overline{X}\cong X\oplus Z_1^{a_1-1}$, then we have a triangle $Y\rightarrow X\oplus Z_1^{a_1-1} \rightarrow Z_1^{a_1-1}\rightarrow Y$, to which we can apply the induction hypothesis. 
     Otherwise, we can apply Lemma \ref{lem_degeneration_with_Z_indec} to the triangle (\ref{eq_triangle_X1_proof_proposition_chain_degenerations}): if $X_1\cong X\oplus Z_1^{a_1-2}$ then we are again in the case $\overline{X}\cong X\oplus Z_1^{a_1-1}$; otherwise, there exist an object $X_2$ in $D^b(Q)$ such that $X_1\cong X_2\oplus Z_1$ and a triangle 
     $ X_2\rightarrow X\oplus Z_1^{a_1-2}\rightarrow Z_1\rightarrow X_2[1].
    $ By iterating this process, if $\overline{X}\ncong X\oplus Z_1^{a_1-1}$, we end up with a sequence of objects $(X_i)_{1\leq i\leq a_1}$ such that
    $\overline{X}\cong X_1\oplus Z_1\cong X_2\oplus Z_1^2\cong \dots \cong X_{a_1}\oplus Z_1^{a_1}$ and with a triangle
    \begin{equation}
    \label{eq_triangle_Xa_1+1_proof_proposition_chain_degenerations}
        X_{a_1}\rightarrow X\rightarrow Z_1\rightarrow X_{a_1}[1].
    \end{equation}
    By applying the induction hypothesis to the triangle $Y\rightarrow X_{a_1}\oplus Z^{a_1}_1\rightarrow Z_1^{a_1-1}\rightarrow Y[1]$, we obtain a sequence of objects $(M_i)_{1\leq i\leq s}$ such that $M_1=X_{a_1}\oplus Z_1$, $M_{s}\cong Y$ and, for any $i=2,\dots, s$, $M_i$ decomposes as a direct sum $M_i\cong U_i\oplus V_i$ such that there exists a triangle
    \[ U_i\rightarrow M_{i-1} \rightarrow V_i \rightarrow U_i[1].\]
    Therefore, thanks to the triangle (\ref{eq_triangle_Xa_1+1_proof_proposition_chain_degenerations}), by completing the sequence with $M_0=X$, we obtain the desired statement.

     Suppose now $r\geq 2$ and write $Z\cong Z'\oplus Z_1^{a_1}$. By applying Lemma \ref{lem_tring_cat_axioms}, we obtain triangles
     \begin{align} Y\rightarrow Y' \rightarrow Z'\rightarrow Y[1] \quad \text{ and } \label{eq_triangle_Y_Y'_proof_proposition_chain_degenerations}\\ 
     Y'\rightarrow X\oplus Z'\oplus Z_1^{a_1} \rightarrow Z_1^{a_1}\rightarrow Y'[1].\label{eq_triangle_Y'Z_1a_1_proof_proposition_chain_degenerations}
     \end{align}
     We proceed by induction on $a_1$. Assume first that $a_1=1$. By Lemma \ref{lem_degeneration_with_Z_indec}, $Y'\cong X\oplus Z'$ (and in this case, via the triangle (\ref{eq_triangle_Y_Y'_proof_proposition_chain_degenerations}), we can conclude by the induction hypothesis on $r$) or there exists an object $X_{1}'$ such that $Y'\cong X_{1}'\oplus Z_1$ and a short exact sequence  
      \begin{equation} 
     X'_{1}\rightarrow X\oplus Z'  \xrightarrow[]{(g_X,g_{Z'})} Z_1\rightarrow X'_{1}[1].
     \end{equation}
     In the latter case, note that, by the chosen ordering of the direct summands of $Z$, $g_{Z'}=0$. Therefore, 
     \[ X'_{1}\cong \mathrm{cone}(g_X)[-1]\oplus Z'.\] Therefore, by setting $\widetilde{X}=\mathrm{cone}(g_{X})[-1]$, we have a triangle
     \begin{equation}
         \widetilde{X}\rightarrow X\xrightarrow[]{g_x} Z_1\rightarrow \widetilde{X}[1].
         \label{eq_triangle_Xtilde_X_proof_proposition_chain_degenerations}
     \end{equation}
     Moreover, the triangle (\ref{eq_triangle_Y_Y'_proof_proposition_chain_degenerations}) is isomorphic to a triangle 
     \begin{equation}
         Y\rightarrow \widetilde{X}_1\oplus Z_1\oplus Z' \rightarrow Z'\rightarrow Y[1] .
     \end{equation}
     Applying the induction hypothesis on $r$ to the last triangle, we obtain a sequence $(M'_{i})_{1\leq i\leq s}$ with $M_1'=\widetilde{X}\oplus Z_1$ and $M_s'=Y$. Finally, via the triangle $(\ref{eq_triangle_Xtilde_X_proof_proposition_chain_degenerations})$, we can complete the sequence with $M'_0=X$.

    Assume now that $a_1\geq 2$.
    We proceed with the triangle (\ref{eq_triangle_Y'Z_1a_1_proof_proposition_chain_degenerations}) as we have done for triangle (\ref{eq_triangle_Z1_proof_proposition_chain_degenerations}). In particular, there are two cases:\\
    Case 1: there exists a triangle
    \begin{equation}
        Y'\rightarrow X\oplus Z'\oplus Z_1^{a_1-1} \rightarrow Z_1^{a_1-1}\rightarrow Y'[1].
    \end{equation}
    In this case, we can apply the inductive hypothesis on $a_1$.

    Case 2: we get two triangles 
      \begin{align} Y'\rightarrow X'_{a_1}\oplus Z_1^{a_1} \rightarrow Z_1^{a_1-1}\rightarrow Y'[1] \quad \text{ and }
      \label{eq_triangle_Y'X'_a1_proof_proposition_chain_degenerations}\\ 
     X'_{a_1}\rightarrow X\oplus Z'  \xrightarrow[]{(g_X,g_{Z'})} Z_1\rightarrow X'_{a_1}[1].
     \end{align}
     As before, we have $X'_{a_1}\cong \tilde{X}\oplus Z'$ and a triangle
     \begin{equation}
         \widetilde{X}\rightarrow X\xrightarrow[]{g_x} Z_1\rightarrow \widetilde{X}[1].
         \label{eq_triangle_Xtilde_XX_proof_proposition_chain_degenerations}
     \end{equation}
     By a similar reasoning applied to the triangle (\ref{eq_triangle_Y'X'_a1_proof_proposition_chain_degenerations}), there exist an object $Y''$ such that $Y'\cong Y''\oplus Z'$ and a triangle
     \begin{equation}
         Y''\rightarrow \widetilde{X}\oplus Z_1^{a_1}\rightarrow Z_1^{a_1-1}\rightarrow Y''[1].
     \end{equation}
     In particular, the triangle (\ref{eq_triangle_Y_Y'_proof_proposition_chain_degenerations}) is isomorphic to a triangle 
     \begin{equation}
         Y\rightarrow Y''\oplus Z' \rightarrow Z'\rightarrow Y[1] .
     \end{equation}
     Applying the induction hypothesis to these last two triangles, we obtain a sequence $(M'_i)_{1\leq i\leq s}$ with $M_1'=\widetilde{X}\oplus Z_1$ and $M_s'=Y$. Finally, via the triangle $(\ref{eq_triangle_Xtilde_XX_proof_proposition_chain_degenerations})$, we can complete the sequence with $M'_0=X$.
\end{proof}

 For any $Y\cong\bigoplus_{k=1}^r V(i_k,p_k)\in D^b(Q)$, define
\[p_{min}(Y)=\mathrm{min}\{p_k|~1\le k\le r\},\qquad p_{max}(Y)=\mathrm{max}\{p_k|~1\le k\le r\}.\]

\begin{lem}\label{lem:Boundary}
    Let $X$ and $Y$ be objects of $D^b(Q)$. If $X\leq_\mathrm{\Delta} Y$, then \[p_\mathrm{min}(X)\geq p_\mathrm{min}(Y) \quad  \text{and} \qquad p_\mathrm{max}(X)\leq p_\mathrm{max}(Y).\]
\end{lem}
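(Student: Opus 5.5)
By Proposition \ref{prop:DegOrder}, the relation $X\le_\Delta Y$ factors as a chain of ext-degenerations $X\cong M_1\le_{\mathrm{ext}}\cdots\le_{\mathrm{ext}}M_s\cong Y$. The inequalities $p_{\min}(M)\ge p_{\min}(M')$ and $p_{\max}(M)\le p_{\max}(M')$ are transitive. So it suffices to treat a single ext-degeneration: $Y\cong Y_1\oplus Y_2$ together with a triangle $Y_1\to X\to Y_2\to Y_1[1]$. If $X=0$ there is nothing to prove (with the usual convention $p_{\min}(0)=+\infty$, $p_{\max}(0)=-\infty$). Otherwise we must show that every indecomposable summand $V(i,p)$ of $X$ satisfies $p_{\min}(Y)\le p\le p_{\max}(Y)$.

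Take an indecomposable summand $V(i,p)$ of $X$. Since $V(i,p)$ is a summand of $X$, the identity of $V(i,p)$ factors through $X$. If both $\Hom(Y_1,V(i,p))=0$ and $\Hom(V(i,p),Y_2)=0$, then the inclusion $V(i,p)\to X$ composed with $X\to Y_2$ vanishes. Hence the inclusion factors through $Y_1\to X$. The projection $X\to V(i,p)$ then kills the image of this factorization, so the identity of $V(i,p)$ would factor through a zero map, which is impossible. Therefore either $\Hom(V(j,r),V(i,p))\neq0$ for some summand $V(j,r)$ of $Y_1$, or $\Hom(V(i,p),V(j,r))\neq 0$ for some summand $V(j,r)$ of $Y_2$.

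Next we use Happel's Theorem \ref{Happel's Thm}: a nonzero morphism $V(j,r)\to V(i,p)$ between indecomposables is a linear combination of paths in $\widehat{Q}$ from $(j,r)$ to $(i,p)$. Arrows increase the second coordinate by $1$, so such a morphism forces $r\le p$. This gives $p_{\min}(Y)\le p$ in the first case and $p\le p_{\max}(Y)$ in the second. However, each case yields only one of the two inequalities, and this one-sidedness is the main obstacle.

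To get the other inequality, I would use the fact that $Y_1\to X\to Y_2$ is a triangle and examine the connecting morphism. Applying $\Hom(-,V(i,p))$ to the triangle (if the first case fails) or $\Hom(V(i,p),-)$ (if the second fails) should produce a nonzero morphism into or out of a shift. Under Happel's equivalence a shift is a translation in the $p$-coordinate, $[1]=\tau^{-1}\circ\nu$ up to convention, and this is where the shift would naturally enter. My preferred route is cleaner: pass to a different degeneration triangle. From $Y_1\to X\to Y_2\xrightarrow{h}Y_1[1]$, split $X$ by Lemma \ref{lem_tring_cat_axioms} into its summand $V(i,p)$ and a complement. Then argue symmetrically: the composite $V(i,p)\to X\to Y_2$ and the composite $Y_1\to X\to V(i,p)$ cannot both vanish, and if exactly one is nonzero, show that the other side is also nonzero. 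Concretely, suppose $V(i,p)\to Y_2$ is zero. Then $V(i,p)$ lifts to $Y_1$, and the composite $V(i,p)\to Y_1\to X\to V(i,p)$ equals the identity. Thus $V(i,p)$ is a direct summand of $Y_1$, and then $p\ge p_{\min}(Y)$ and $p\le p_{\max}(Y)$ both hold trivially. Symmetrically, suppose $Y_1\to V(i,p)$ is zero. Then the projection factors through $Y_2$, and $V(i,p)\to X\to Y_2\to V(i,p)$ is the identity, so $V(i,p)$ is a summand of $Y_2$. In the remaining case both composites are nonzero, which gives $r\le p$ for some summand of $Y_1$ and $p\le r'$ for some summand of $Y_2$. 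Hence $p_{\min}(Y)\le p\le p_{\max}(Y)$ in all cases, and the statement follows.
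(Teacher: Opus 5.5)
Your final three-case argument is correct and is essentially the paper's proof: reduce to a single ext-degeneration via Proposition \ref{prop:DegOrder}, then note that a summand $V(i,p)$ of $X$ either maps nonzero to $Y_2$ (resp.\ receives a nonzero map from $Y_1$), which bounds $p$ via paths in $\widehat{Q}$, or else splits off as a direct summand of $Y_1$ (resp.\ $Y_2$); the paper proves only the $p_{\max}$ half this way and calls the $p_{\min}$ half analogous. You can drop the exploratory detour through shifts and the citation of Lemma \ref{lem_tring_cat_axioms}, which splits the third term of a triangle rather than the middle one; your argument only uses the inclusion and projection of the summand and the exactness of $\Hom$ on the triangle.
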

\begin{proof}
We prove that $\qquad p_\mathrm{max}(X)\leq p_\mathrm{max}(Y)$, the other case being analogous. By proposition \ref{prop:DegOrder}, we can assume $X\le_{\mathrm{ext}}Y$. Therefore, there exists a decomposition $Y\cong Y'\oplus Y''$ with a triangle
 \[Y'\to X\to Y''\to Y'[1].\]
 We must have $p_{max}(X)\le p_{max}(Y)$. Otherwise, there exists a direct summand $X_k=V(i_k,p_k)$ of $X$ such that $p_k>p_{max}(Y)$, which implies $\Hom_{D^b(Q)}(X_k,Y'')=0$. Hence $X_k$ would be an indecomposable direct summand of $Y'$ and $p_{max}(Y')\ge p_k$, which leads to a contradiction.
\end{proof}

Set 
$$Y^{max}=\bigoplus_{k,\ p_k=p_{max}(Y)} V(i_k,p_k),\qquad Y^{-}=\bigoplus_{k,\ p_k<p_{max}(Y)}V(i_k,p_k),$$
and 
$$Y^{min}=\bigoplus_{k,\ p_k=p_{min}(Y)} V(i_k,p_k),\qquad Y^{+}=\bigoplus_{k,\ p_k>p_{min}(Y)}V(i_k,p_k),$$
Then $Y\cong Y^-\oplus Y^{max}\cong Y^{min}\oplus Y^+$.

For any $X\le_{\Delta} Y$, with  $X\cong \bigoplus_{k}V(i'_k,p'_k)$, we set
  $$X^{Y,max}=\bigoplus_{k,\ p'_k= p_{max}(Y)}V(i'_k,p'_k),\qquad X^{Y,-}=\bigoplus_{k,\ p'_k< p_{max}(Y)}V(i'_k,p'_k).$$
Then we have $X\cong X^{Y,-}\oplus X^{Y,max}$.

For any object $X$ of $D^b(Q)$, let $|X|$ be the set of its indecomposable direct summands, counted with their multiplicities. We write $\#|X|$ for its cardinality.

\begin{lem} \label{lem:ExtDeg1}
Let $X$ and $Y$ be objects of $D^b(Q)$.  Then $X\le_{ext}Y$ if and only if there exists a decomposition $Y^{max}\cong X^{Y,max}\oplus Z$ such that $X^{Y,-}\le_{ext}Y^-\oplus Z$.
\begin{proof}
  It follows from the definition $\le_{\mathrm{ext}}$ that there exists a decomposition $Y\cong Y'\oplus Y''$ fitting into a triangle
  \[Y'\stackrel{f}\to X\stackrel{g}\to Y''\to Y'[1].\]
 If $p_{max}(X)<p_{max}(Y)$, then $X^{Y,max}=0$ and 
 \[X^{Y,-}\cong X\le_{ext}Y\cong Y^{-}\oplus Y^{max}\]
 as desired.
 
  If $p_{max}(X)=p_{max}(Y)$, then $X^{max}=X^{Y,max}$ and $X^-=X^{Y,-}$. Let $X^{max}=\bigoplus_{i=1}^s X^{max}_i$ be  the decomposition into indecomposable objects and let $\iota_s:X^{max}_s\to X$ be the canonical embedding. Write $X^{max}=X^{max}_{<s}\oplus X^{max}_s$ and 
   $$g=(g',g_s):\bigl(X^{-}\oplus X^{max}_{<s}\bigr)\oplus X^{max}_s\to Y''.$$  
  If $g_s=0$, then $Y'\cong \mathrm{Cong}(g)[-1]\cong  \mathrm{Cone}(g')[-1]\oplus X^{max}_s$ and 
   $$Y\cong Y''\oplus \mathrm{Cone}(g')[-1]\oplus X^{max}_s.$$
  It follows from $Y\cong Y^-\oplus Y^{max}$ and $p_{max}(Y^{+})<p_{max}(Y)$ that there exists $Z$ such that $Y^{max}\cong X^{max}_s\oplus Z $ and $Y^-\oplus Z\cong Y''\oplus \mathrm{Cone}(g')[-1]$. In particular, there is a  triangle
    $$\mathrm{Cone}(g')[-1]\to X^{-}\oplus X^{max}_{<s}\stackrel{g'}\to Y''\to \mathrm{Cone}(g'),$$
    which implies 
  \begin{equation}\label{Ineq:extDeg1}
        X^{-}\oplus X^{max}_{<s}\le_{ext} Y^-\oplus Z.
  \end{equation}
    
   If $g_s\ne 0$, notice that $\Hom(X^{max}_s,Y^-)=0$ and  $p_{max}(X)=p_{max}(Y)$. Moreover, we have $X^{max}_s\in |Y^{max}|$ and
    $$g_s=(0,g'_s):X^{max}_s\to \bar{Y}''\oplus (X^{max}_s)^{\oplus a}\cong Y'' .$$
  Since $g'_s\ne 0$ and $X^{max}_s$ is indecomposable, $g'_s$ is a retraction, which implies 
    $$\mathrm{Cone}(g_s)\cong \bar{Y}''\oplus \mathrm{Cone}(g'_s)\cong \bar{Y}''\oplus (X^{max}_s)^{a-1}.$$
  Moreover,  we obtain a commutative diagram of triangles from $g_s=g\circ\iota_s$:
    $$\begin{tikzcd}
      &X^{max}_s\arrow[r,equal] \arrow[d,"\iota_s"] &X^{max}_s\arrow[d,"g_s"]\\
      Y'\arrow[d,equal]\arrow[r,"f"] &X\arrow[d]\arrow[r,"g"] &Y''\arrow[r]\arrow[d] &Y'[1]\arrow[d,equal]\\
      Y'\arrow[r] &X^{-}\oplus X^{max}_{<s}\arrow[d]\arrow[r] &\mathrm{Cone}(g_s)\arrow[d]\arrow[r] &Y'[1]\\
       &X_t[1]\arrow[r,equal] &X_t[1]
    \end{tikzcd},$$
    which implies 
    \begin{equation}\label{Ineq:extDeg2}
        X^{-}\oplus X^{max}_{<s}\le_{ext} Y'\oplus \bar{Y}''\oplus (X^{max}_s)^{a-1}.
    \end{equation}
   Set $Z=\bar{Y}''\oplus (X^{max}_s)^{a-1}$.
    If $\#|X^{max}|=1$, then $X^{max}_{<s}=0$ and (\ref{Ineq:extDeg1}) and (\ref{Ineq:extDeg2}) reduce to
     $$X^{-}\le_{ext} Y^-\oplus Z,$$
    with $Y^{max}\cong Z\oplus X^{max}$. For $\#|X^{max}|\ge 2$, by applying the induction hypothesis to (\ref{Ineq:extDeg1}) and (\ref{Ineq:extDeg2}), we obtain a decomposition $Z\cong Z'\oplus X^{max}_{<s}$ such that 
    $$X^{-}\le_{ext} Y^-\oplus Z',$$
    as $\bigl(X^{-}\oplus X^{max}_{<s}\bigr)^-=X^{-}$ and $\bigl(Y^-\oplus Z\bigr)^-=Y^-$. Hence, we complete the proof by noting that $Y^{\max}\cong Z\oplus X^{max}_s\cong  Z'\oplus X^{max}_{<s}\oplus X_s^{max}$. 
\end{proof}
\end{lem}
  Analogously to the above setting, we can also define $X^{Y,min}$ and $X^{Y,min}$:
  \[X^{Y,min}=\bigoplus_{k,\ p'_k\le p_{min}(Y)} V(i'_k,p'_k),\qquad X^{Y,+}=\bigoplus_{k,\ p_k>p_{min}(Y)}V(i'_k,p'_k).\]
  Then $X\cong X^{Y,+}\oplus X^{Y,min}$ and similarly as above, we can prove:
\begin{lem}\label{lem:ExtDeg2}
For any $X,Y\in D^b(Q)$,  $X\le_{ext}Y$ if and only if there exists a decomposition $Y^{min}=X^{Y,min}\oplus Z'$ such that $X^{Y,+}\le_{ext}Y^{+}\oplus Z'$.
\end{lem}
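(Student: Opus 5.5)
This is the mirror image of Lemma~\ref{lem:ExtDeg1}, so I will repeat that argument with every arrow reversed. Concretely, I replace $p_{max}$ by $p_{min}$, $Y^{max},Y^{-}$ by $Y^{min},Y^{+}$, and work with the first map $f$ of the triangle rather than the second map $g$. The facts used are the analogues of those in the proof of Lemma~\ref{lem:ExtDeg1}. Morphisms in $D^b(Q)$ between indecomposables go from smaller to larger $p$ (Theorem~\ref{Happel's Thm}: paths in $\widehat{Q}$ increase $p$). Hence $\Hom(Y^{+},W)=0$ whenever $W$ is indecomposable with $p=p_{min}(Y)$, and a nonzero map between indecomposables with the same $p$ is an isomorphism. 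By Lemma~\ref{lem:Boundary}, $p_{min}(X)\ge p_{min}(Y)$, so $X^{Y,min}$ consists exactly of the summands with $p=p_{min}(Y)$.

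The ``if'' direction is easy. Given $Y^{min}\cong X^{Y,min}\oplus Z'$ and a triangle $A\to X^{Y,+}\to B\to A[1]$ with $A\oplus B\cong Y^+\oplus Z'$, add the trivial triangle $0\to X^{Y,min}\to X^{Y,min}\to 0$. The direct sum $A\to X\to B\oplus X^{Y,min}\to A[1]$ then exhibits $X\le_{ext}Y$, since $A\oplus B\oplus X^{Y,min}\cong Y^{+}\oplus Z'\oplus X^{Y,min}\cong Y$.

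For ``only if'', start from $Y'\xrightarrow{f}X\xrightarrow{g}Y''\to Y'[1]$ with $Y\cong Y'\oplus Y''$. If $p_{min}(X)>p_{min}(Y)$, then $X^{Y,min}=0$ and we take $Z'=Y^{min}$. Otherwise, induct on $\#|X^{min}|$. Write $X^{min}=X^{min}_{<s}\oplus X^{min}_s$ and look at the component $f_s\colon Y'\to X^{min}_s$ of $f$.

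If $f_s=0$, then $X^{min}_s$ splits off $X$ compatibly with the triangle. The cone of $f$ becomes $\mathrm{Cone}(f')\oplus X^{min}_s$, where $f'\colon Y'\to X^{+}\oplus X^{min}_{<s}$. So $Y''\cong\mathrm{Cone}(f')\oplus X^{min}_s$, which gives $X^{min}_s\in|Y^{min}|$ and a triangle $Y'\to X^{+}\oplus X^{min}_{<s}\to \mathrm{Cone}(f')\to Y'[1]$.

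If $f_s\neq0$, decompose $Y'\cong \bar Y'\oplus (X^{min}_s)^{a}\oplus Y'^{+}$. Every map from $Y'^{+}$ to $X^{min}_s$ is zero, and so is every map from a summand of minimal $p$ not isomorphic to $X^{min}_s$. Hence $f_s$ factors through a nonzero map $(X^{min}_s)^a\to X^{min}_s$, which is a split epimorphism. Dually to the diagram in Lemma~\ref{lem:ExtDeg1}, apply the octahedral axiom to $f_s=\pi_s\circ f$, where $\pi_s$ is the projection. This produces a triangle with middle term $X^{+}\oplus X^{min}_{<s}$ and outer terms whose direct sum is $Y''\oplus\bar Y'\oplus Y'^{+}\oplus(X^{min}_s)^{a-1}$, that is, $Y$ with one copy of $X^{min}_s$ removed.

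In both cases we obtain $X^{+}\oplus X^{min}_{<s}\le_{ext} Y^{+}\oplus Z$ with $Y^{min}\cong X^{min}_s\oplus Z$. Applying the induction hypothesis to this ext-degeneration gives $Z\cong X^{min}_{<s}\oplus Z'$ and $X^{+}\le_{ext}Y^{+}\oplus Z'$. Here we use that $p_{min}$ of both sides is still $p_{min}(Y)$, or the inequality is trivial, and that $(Y^{+}\oplus Z)^{+}=Y^{+}$.

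The main obstacle is the octahedral step when $f_s\ne0$: one must check that the third term has the claimed form, namely that the kernel-type object of the split epimorphism is $(X^{min}_s)^{a-1}$ and that the remaining summands of $Y'$ pass through unchanged. This is where the vanishing of $\Hom$ from higher $p$ to lower $p$ is used essentially.
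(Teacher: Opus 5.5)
Your proposal is correct and takes the intended approach. The paper gives no separate proof of this lemma, saying only that it is proved ``similarly'' to Lemma~\ref{lem:ExtDeg1}, and your argument is the mirror of that proof: you work with the component $f_s$ of the first map, use the vanishing of $\Hom$ from larger to smaller $p$, and apply the octahedral axiom to $f_s=\pi_s\circ f$; you also handle the easy ``if'' direction and the $Z=0$ edge case of the induction correctly.
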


  Recall that, by Proposition \ref{prop:DegOrder}, for any $X\le_{\Delta}Y$, we can find a chain  of ext-degenerations:
  \[X=M_0 \le_{\mathrm{ext}}M_1\le_{\mathrm{ext}}\cdots\le_{\mathrm{ext}}M_s=Y.\]
  Applying the previous two lemmas to each intermediate degeneration $M_{i-1}\le_{ext}M_i$, we obtain the following.

\begin{cor}\label{cor:ExtDeg}
 For any $X,Y\in D^b(Q)$, ~
 \begin{itemize}
     \item[(i)] $X\le_{\Delta}Y$ if and only if there exists an object $Z$ such that  $Y^{max}\cong X^{Y,max}\oplus Z$ and $X^{Y,-}\le_{\Delta}Y^-\oplus Z$.
     \item[(ii)] $X\le_{\Delta}Y$ if and only if there exists  an object $Z$ such that $Y^{min}\cong X^{Y,min}\oplus Z'$ and $X^{Y,+}\le_{\Delta}Y^{+}\oplus Z'$.
 \end{itemize}
\end{cor}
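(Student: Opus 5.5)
We prove (i); part (ii) follows by the same argument with Lemma \ref{lem:ExtDeg2} in place of Lemma \ref{lem:ExtDeg1} and the roles of $p_{max}$ and $p_{min}$ exchanged.

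\emph{The ``if'' direction.} Suppose $Y^{max}\cong X^{Y,max}\oplus Z$ and $X^{Y,-}\le_{\Delta}Y^-\oplus Z$. Since $\le_\Delta$ is compatible with direct sums (add the identity triangle of a common summand to a defining triangle), we get
\[X\cong X^{Y,-}\oplus X^{Y,max}\le_\Delta Y^-\oplus Z\oplus X^{Y,max}\cong Y^-\oplus Y^{max}\cong Y.\]

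\emph{The ``only if'' direction.} By Proposition \ref{prop:DegOrder}, choose a chain
\[X=M_0\le_{ext}M_1\le_{ext}\cdots\le_{ext}M_s=Y.\]
By Lemma \ref{lem:Boundary}, $p_{max}(M_j)\le p_{max}(Y)$ for every $j$. This lets us use the relative decompositions $M_j^{Y,max}$ and $M_j^{Y,-}$ with respect to the fixed level $p:=p_{max}(Y)$.

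The key step is a relative version of Lemma \ref{lem:ExtDeg1}. Suppose $A\le_{ext}B$ and $p_{max}(A),p_{max}(B)\le p$. Then $B^{Y,max}\cong A^{Y,max}\oplus W$ for some $W$, and $A^{Y,-}\le_{ext}B^{Y,-}\oplus W$.

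\begin{itemize}
\item If $p_{max}(B)=p$, this is exactly Lemma \ref{lem:ExtDeg1}.
\item If $p_{max}(B)<p$, then $p_{max}(A)<p$ by Lemma \ref{lem:Boundary}. Hence both level-$p$ parts vanish, and the claim holds trivially with $W=0$.
\end{itemize}

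Apply this to each step $M_{j-1}\le_{ext}M_j$ to obtain $W_j$. Set $Z_j=W_1\oplus\cdots\oplus W_j$. Then $M_j^{Y,max}\cong X^{Y,max}\oplus Z_j$, so $Y^{max}\cong X^{Y,max}\oplus Z_s$. We now show by induction on $j$ that
\[X^{Y,-}\le_\Delta M_j^{Y,-}\oplus Z_j.\]
\begin{itemize}
\item The case $j=0$ is trivial.
\item For the inductive step, add $Z_{j-1}$ to both sides of $M_{j-1}^{Y,-}\le_{ext}M_j^{Y,-}\oplus W_j$. This uses that $\le_{ext}$, hence $\le_\Delta$, is stable under adding a common summand. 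Then use transitivity of $\le_\Delta$.
\end{itemize}
Taking $j=s$ gives $X^{Y,-}\le_\Delta Y^-\oplus Z$ with $Z=Z_s$.

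The only subtle point is that the intermediate $M_j$ may have $p_{max}(M_j)<p$. This is why the decomposition must be taken relative to the fixed level $p$ rather than to each $p_{max}(M_j)$, and why Lemma \ref{lem:Boundary} is needed to ensure no summand exceeds level $p$. We also use Krull--Schmidt to cancel and regroup summands at level $p$.
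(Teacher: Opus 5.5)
Your proposal is correct and follows the paper's route: a chain of ext-degenerations from Proposition \ref{prop:DegOrder}, with Lemma \ref{lem:ExtDeg1} (resp.\ Lemma \ref{lem:ExtDeg2}) applied to each step. Your use of Lemma \ref{lem:Boundary} for intermediate $M_j$ with $p_{max}(M_j)<p_{max}(Y)$, and the inductive assembly of $Z=W_1\oplus\cdots\oplus W_s$, supply bookkeeping the paper's one-line proof leaves implicit. One caveat, shared with the paper's formulation: your ``if'' direction uses $X\cong X^{Y,-}\oplus X^{Y,max}$, i.e.\ $p_{max}(X)\le p_{max}(Y)$, and this must be read as part of the hypothesis, since that notation is only set up for $X\le_\Delta Y$.
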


\medskip
\subsection{Minimal degenerations} \label{sec:MinDeg}
For any $X,Y\in D^b(Q)$ such that $X\ncong Y$, we say that $Y\leq_{\Delta} X$ is \textit{minimal}, if, for any $U\in D^b(Q)$ satisfying
 $$Y\leq_{\Delta} U\leq_{\Delta} X,$$
we have either $U\cong Y$ or $U\cong X$.

A direct consequence of Proposition \ref{prop:DegOrder} is the following.
\begin{cor}\label{cor:Min2Ext}
If $X\leq_{\Delta} Y$ is a minimal degeneration, then it is an ext-degeneration.
\end{cor}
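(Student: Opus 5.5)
We deduce the statement directly from Proposition \ref{prop:DegOrder}. Suppose $X\le_{\Delta} Y$ is minimal, so in particular $X\ncong Y$. By Proposition \ref{prop:DegOrder} there is a chain of ext-degenerations
\[ X\cong M_1\le_{\mathrm{ext}} M_2\le_{\mathrm{ext}}\cdots\le_{\mathrm{ext}} M_s\cong Y. \]
Each ext-degeneration is a degeneration, by the remark following the definition of $\le_{\mathrm{ext}}$. Since $\le_{\Delta}$ is transitive (it is a partial order by \cite{Jensen_Su_Zimmerman_2005II_tring_cat}), every intermediate term satisfies
\[ X\le_{\Delta} M_i\le_{\Delta} Y. \]

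By minimality, each $M_i$ is isomorphic to $X$ or to $Y$. Because $M_1\cong X$ and $M_s\cong Y\ncong X$, there is a smallest index $j$ with $M_j\cong Y$. Then $j\geq 2$, and $M_{j-1}\ncong Y$, so by minimality $M_{j-1}\cong X$.

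The step $M_{j-1}\le_{\mathrm{ext}} M_j$ therefore gives a decomposition $Y\cong Y_1\oplus Y_2$ together with a triangle
\[ Y_1\to X\to Y_2\to Y_1[1]. \]
This is exactly the statement $X\le_{\mathrm{ext}} Y$.

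The only step that needs care is the passage from ext-degenerations to $\le_{\Delta}$-comparabilities of the intermediate terms. This relies on the remark that $\le_{\mathrm{ext}}$ implies $\le_{\Delta}$ and on the transitivity of $\le_{\Delta}$, both of which are available earlier in the paper. No other obstacle arises; in particular, we never need to analyse the triangles of Proposition \ref{prop:DegOrder} themselves.
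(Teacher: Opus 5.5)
Your argument is correct and is the same as the paper's: the paper's entire justification is that the corollary is a direct consequence of Proposition \ref{prop:DegOrder}. You spell that out correctly. Transitivity of $\le_{\Delta}$ (which the paper takes from Jensen--Su--Zimmermann) places every $M_i$ between $X$ and $Y$, minimality forces $M_i\cong X$ or $M_i\cong Y$, and the step where the chain first passes from $X$ to $Y$ is the required ext-degeneration.
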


\begin{lem}\label{lem:MinExtTriangle}
  If $X\le_{\mathrm{ext}} Y$ is minimal, then there exist two indecomposable direct summands  $Y_1$, $Y_2$ of $Y$ with a non-split triangle $Y_1\to E\to Y_2\to Y_1[1]$ such that 
     $$X\cong E\oplus \bigoplus_{j\ne 1,2}Y_j.$$
\begin{proof}
  By definition, there exists a decomposition $Y\cong Y'\oplus Y''$ fitting into a non-split triangle
  \[Y'\to X\to Y''\to Y'[1].\]
  We denote by $[\xi]$ the class of the above triangle in $\Ext^1(Y'',Y')$. Let $Y'\cong \bigoplus_i Y'_i$ be the decomposition into indecomposable objects. Note that 
   $$\Ext^1(Y'',Y')\cong \bigoplus_i \Ext^1(Y'',Y'_i).$$
  There exists at least an index $i$ such that $(\rho_i)_*[\xi]\ne 0$, where $\rho_i: Y'\to Y'_i$ is the canonical projection. Setting $Y^-=\bigoplus_{k\ne i}Y'_k$, we obtain a commutative diagram of triangles:
  \[\begin{tikzcd}
   &Y^- \arrow[r,equal]\arrow[d] &Y^-\arrow[d]\\
   {[\xi]}: &Y'\arrow[d,"\rho_2"]\arrow[r] &X\arrow[r,]\arrow[d] &Y''\arrow[r]\arrow[d,equal] &X[1]\arrow[d]\\
   {(\rho_{i})_*[\xi]}: & Y'_i\arrow[r]\arrow[d] &E\arrow[d]\arrow[r] &Y''\arrow[r] &X_i[1]\\
    &Y^-[1]\arrow[r,equal] &Y^-[1].\\
  \end{tikzcd}\]
 Therefore, we have
 \[Y^-\oplus Y'_i\oplus Y''\le_{\Delta} Y^-\oplus E\le_{\Delta} X.\]
 Since $[\xi']:=(\rho_i)_*[\xi]\ne 0$, we have $E\not\cong Y'_i\oplus Y''$. Since that $X\le_{\mathrm{ext}}Y$ is minimal, we can deduce that $ X\cong Y^-\oplus E$ and that $Y'_i\oplus Y''\le_{\Delta}E$ is minimal. Similarly,  consider the pullback of $[\xi']$ along the canonical inclusion $\iota_j:Y''_j\to Y''$ such that $\iota_j^*[\xi']\ne 0$. Let $E_{ij}$ be the middle term of $\iota_j^*[\xi']$. Applying the same arguments to $([\xi'],\iota_j^*[\xi'])$, we obtain $E\cong E_{ij}\oplus Y^-$ with $Y^{-}\oplus Y''_j\cong Y''$, which implies the desired result.
\end{proof}
\end{lem}

Note that the converse of the above Lemma does not hold in general, as the following example shows. 

\begin{ex}
For any vertex $i$ of $Q$, let $S_i$ be simple modules in $\mathrm{mod}(\mathbf{k}A_3)$, and $I_i$ (resp. $P_i$)  be the corresponding injective (resp. projective) module. Then the degeneration
\[I_2\le_{\Delta}S_2\oplus I_2\oplus S_2[1]\]
is given by a non-split extension of $S_2$ by $S_2[1]$. On the other hand, we also have
\[I_2\le_{\Delta} P_1\oplus S_3\le_{\Delta} P_1\oplus P_3\oplus S_2[1]\le_{\Delta}S_2\oplus I_2\oplus S_2[1]. \]
\end{ex}

\medskip

If $X\le_{\Delta}Y$ is not minimal, we would like to decompose it into a chain of minimal degenerations by using Proposition \ref{prop:DegOrder}. However, we need to show that the process stops and that we obtain a chain of finite length. In order to address this point, we will need the following well-known result (we provide a proof for the convenience of the reader).

\begin{lem}\label{lem:FiniteExt}
For any $X,Y\in D^b(Q)$, the set 
    \[\sE xt(X,Y)=\{L\in \mathrm{Iso}D^b(Q)|~X\to L\to Y\to X[1] \text{ is a triangle}\}\]
    is finite.
\begin{proof}
For any indecomposable object $Z_k=V(j_k,s_k)$ ($k=1,2$), write $Z_k[1]=V(j_k',s'_k)$. It can be seen from Happel's Theorem \ref{Happel's Thm} that the difference
  $$d=s'_2-s'_2=s'_1-s_1,$$
is constant. Moreover, for each $(i_0,p_0)\in \widehat{I}$, the full subcategory $\sH_{p_0}$ of $D^b(Q)$ whose indecomposable objects consists of $\{V(i,p)|~p_0\le p< p_0+d\}$ is an heart of $D^b(Q)$. In particular, $\sH_{p_0}\simeq \mathrm{mod}\mathbf{k}Q$.

  If $p_{min}(Y)\le p_{min}(X)$,  write $Y=\bigoplus_{k=1}^s V(i_k,p_k)$. Denote $$Y'=\bigoplus_{k,p_k\le p_{min}(X)}V(i_k,p_k),\qquad Y\cong Y'\oplus Y''.$$
  Since $\Ext^1(Y',X)=0$, we must have
  \[\sE xt(X,Y)=\{L'\oplus Y'|~L'\in \sE xt(X,Y'')\}.\]
  Therefore, we can assume $p_{min}(X)<p_{min}(Y)$. Similarly, if \[p_{max}(X)\ge p_{max}(Y)=p_{max}(Y''),\] by defining $X'$, $X''$ as before, we obtain
  \[\sE xt(X,Y)=\{L''\oplus X''\oplus Y'|~L''\in \sE xt(X',Y'')\}.\]
  Without loss of generality, we can assume 
  \[p_{min}(X)<p_{min}(Y),~ p_{max}(X)<p_{max}(Y).\]
  Thus, for any $Z\in \sE(X,Y)$, we have
  \begin{equation}\label{eq:ExtDistance}
       p_{min}(X)\le p_{min}(Z)\le p_{max}(Z)\le p_{max}(Y).
  \end{equation}

  We prove the statement by induction on $p_{max}(Y)-p_{min}(X)$. By Lemma \ref{lem:ExtDeg1}, for any $Z\in \sE xt(X,Y)$, there exists $Z'$ such that $(X\oplus Y)^{max}\cong Z^{X\oplus Y,max}\oplus Z'$ and $Z^{X\oplus Y,-}\in \sE xt(X,Y^-\oplus Z')$. Hence, we have 
   \[\begin{aligned}
      &\sE xt(X,Y)\\
      =&\{Z'\oplus Z''\in \mathrm{Iso}D^b(Q)|~Z''\in \sE xt(X,Y^-\oplus Z')\text{ such that }p_{max}(Z'')<p_{max}(Y), ~|Z'|\subset |Y^{max}|\}.
  \end{aligned}\]
  Moreover, since by the assumption (\ref{eq:ExtDistance}) we also have 
  $$p_{min}(Z'')\ge p_{min}((X\oplus Y)^+\oplus Z')=p_{min}(X).$$
  If $p_{max}(Y)-p_{min}(X)\le d$, 
  it follows that for any $Z''\in \sE xt(X,Y^-\oplus Z')$, we have $Z''\in \sH_{p_{min}(X)}$. Since there are only finitely many isoclasses with a fixed dimension vector in $\mathrm{mod}\kQ$, we deduce that $\sE xt(X,Y)$ is finite, by noting that $\sH_{p_{min}(X)}\simeq \mathrm{mod}\kQ$ and $\{Z'\in \mathrm{Iso}D^b(Q)|~|Z'|\subseteq |Y|\}$ are finite.

  When $p_{max}(Y)-p_{min}(X)>  d$,  let $\iota:Y^{max}\to Y$ be the canonical embedding. For any triangle $[\xi]:X\to Z\to Y\to X[1]$, consider the following commutative diagram of triangles: 
  $$\begin{tikzcd}
     & &Y^-[-1]\arrow[r,equal] \arrow[d] &Y^-[-1]\arrow[d]\\
    {\iota^*[\xi]}:  &X\arrow[d,equal]\arrow[r] &Z'\arrow[d]\arrow[r] &Y^{max}\arrow[r]\arrow[d,"\iota"] &X[1]\arrow[d,equal]\\
     {[\xi]}: &X\arrow[r] &Z\arrow[d]\arrow[r] &Y\arrow[d]\arrow[r] &Y'[1].\\
     &  &Y^-\arrow[r,equal] &Y^-
    \end{tikzcd}$$
  Hence
  \[\sE xt(X,Y)\subset \bigcup_{Z'\in \sE xt(X,Y^{max})}\sE xt(Z',Y^-).\]
  Moreover, for any $Z'\in \sE xt(X,Y^{max})$,  we have 
  $$p_{min}(Z')\ge \min\{p_{min}(X),p_{min}(Y^{max})\}=p_{min}(X),$$
  as $p_{min}(Y^{max})=p_{max}(Y)>p_{min}(X)$, so
  $$p_{max}(Y^-)-p_{min}(Z')<p_{max}(Y)-p_{min}(X).$$
  By induction hypothesis, $\sE xt(Z',Y^-)$ is finite. Write $X=\bigoplus_k V(i'_k,p'_k)$ and denote
  \[L_{+}:=\bigoplus_{k,~p_k\ge p_{max}(Y)-d}V(i_k,p_k),\qquad L_{-}:=\bigoplus_{k,~p_k< p_{max}(Y)-d}V(i_k,p_k),\]
  then $X\cong L_+\oplus L_-$. Since $\Ext^1(Y^{max},L_-)=0$, we have 
  \[\sE xt(X,Y^{max})=\{Z''\oplus L_-|~Z''\in \sE xt(L_+,Y^{max})\}.\]
  Since $p_{max}(Y)-p_{min}(L_+)\le d$, by the previous arguments, $\sE xt(X,Y^{max})$ is a finite set. Hence, the set $\sE xt(X,Y)$ is finite.

\end{proof}
\end{lem}

\begin{prop}\label{prop:FiniteDeg}
  For any $Y\in D^b(Q)$, the set $\mathrm{Deg}(Y)=\{X\in \mathrm{Iso}D^b(Q)|~X\le_{\Delta}Y\}$
  is finite.
\begin{proof}
  We proceed the proof by induction on $t=p_{max}(Y)-p_{min}(Y)$. If $t=1$, then objects in $|Y|$ are perpendicular to each other. Hence $\mathrm{Deg}(Y)=\{Y\}$. For $t\ge 1$, write

  \[ \Sigma_1= \{X\in \mathrm{Deg}(Y)|~p_{max}(X)<p_{max}(Y)\},\]
  
  and 
  \[\Sigma_2= \{X\in \mathrm{Deg}(Y)|~p_{max}(X)=p_{max}(Y)\}.\]
  Then, by Lemma \ref{lem:Boundary},
  \[
  \begin{aligned}
    &\mathrm{Deg}(Y)=\Sigma_1\sqcup \Sigma_2.
  \end{aligned}\]
  Moreover, by Corollary \ref{cor:ExtDeg},
  $$\Sigma_2\subset \bigcup_{|Z|\subsetneq |Y^{max}|}\{X'\oplus (Y^{max}/Z)\in \mathrm{Iso}D^b(Q)|~X'\in \mathrm{Deg}\bigl(Y^-\oplus Z\bigr)\},$$
  where $Y^{max}/Z$ is the quotient of the canonical embedding $Z\hookrightarrow Y^{max}$.
  If $\#|Y^{max}|=1$, then 
  $$\Sigma_2=\{X'\oplus Y^{max}|~X'\in \mathrm{Deg}(Y^-)\},$$
  which is finite by the induction hypothesis. For any $X\in \Sigma_1$, by Proposition \ref{prop:DegOrder}, there must exists $M\in \Sigma_2$ such that $X\le_{\mathrm{ext}}M$. Denote $\Delta(M)=\{X\in \mathrm{Iso}D^b(Q)|~X\le_{ext}M\}$. Hence,
  \[\Sigma_1\subset \bigcup_{M\in \Sigma_2} \Delta(M),\]
  which is finite since there are only finitely many decompositions $M'\oplus M''$ of $M$ and $\sE xt(M',M'')$ is finite by Lemma \ref{lem:FiniteExt}. Therefore, $\mathrm{Deg}(Y)$ is finite. If $\#|Y^{max}|\ge 2$, then we can perform induction on $\#|Y^{max}|$ to deduce that $\Sigma_2$ is finite.  Then we can show that $\Sigma_1$ is finite as before, which implies the desired result.
  
\end{proof}
\end{prop}

\begin{cor}\label{cor:ChainMinDeg}
  For any $X\le_{\Delta}Y$, there exists a chain of minimal degenerations 
  \[X=N_0\le_{\mathrm{ext}} N_1\le_{\mathrm{ext}}\cdots\le_{\mathrm{ext}}N_l=Y.\]
\end{cor}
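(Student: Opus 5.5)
The plan is to use finiteness of $\mathrm{Deg}(Y)$ from Proposition \ref{prop:FiniteDeg} together with the fact that $\le_{\Delta}$ is a partial order on $\mathrm{Iso}D^b(Q)$. I will refine a given degeneration into minimal steps by induction on the number of elements of $\mathrm{Deg}(Y)$ lying between $X$ and $Y$. Each minimal step will then be shown to be an ext-degeneration by Corollary \ref{cor:Min2Ext}.

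First fix $X\le_{\Delta}Y$ and consider the interval
\[[X,Y]=\{U\in \mathrm{Iso}D^b(Q)\mid X\le_{\Delta}U\le_{\Delta}Y\}.\]
It is contained in $\mathrm{Deg}(Y)$ and is therefore finite. If $X\cong Y$, take the trivial chain with $l=0$. Otherwise I induct on $\#[X,Y]$.

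If $X\le_{\Delta}Y$ is minimal, Corollary \ref{cor:Min2Ext} gives $X\le_{\mathrm{ext}}Y$, and the chain has length one. If it is not minimal, there is $U\in[X,Y]$ with $U\not\cong X$ and $U\not\cong Y$. By transitivity $[X,U]\subseteq[X,Y]$ and $[U,Y]\subseteq[X,Y]$. Antisymmetry shows both inclusions are strict: $Y\notin[X,U]$, since otherwise $Y\le_{\Delta}U\le_{\Delta}Y$ forces $U\cong Y$, and likewise $X\notin[U,Y]$. So both intervals are strictly smaller, and the induction hypothesis gives chains of minimal degenerations from $X$ to $U$ and from $U$ to $Y$. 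Concatenating them gives the desired chain.

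A subtle point is that minimality is defined globally, by the absence of intermediate objects anywhere in $D^b(Q)$. Any intermediate object of a step lies in the relevant subinterval, hence in $[X,Y]$, so the two notions agree and the steps obtained are genuinely minimal. The main ingredient, and the only nontrivial one, is the finiteness supplied by Proposition \ref{prop:FiniteDeg}. Without it, refinement could go on forever and never terminate, which is exactly the issue raised before Lemma \ref{lem:FiniteExt}.
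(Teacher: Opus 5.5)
Your argument is correct and is essentially the one the paper intends; the paper states this corollary without a written proof, right after flagging termination as the only issue. Finiteness of $\mathrm{Deg}(Y)$ from Proposition~\ref{prop:FiniteDeg} ensures that refining non-minimal steps stops, and Corollary~\ref{cor:Min2Ext} turns each minimal step into an ext-degeneration; your strong induction on $\#[X,Y]$, using transitivity and antisymmetry of $\le_{\Delta}$ to make both subintervals strictly smaller, makes that termination explicit.
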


\medskip
\subsection{Main theorem}
  To show that the injection 
  \[V(-):\{n\le m|~n\in \sM^+\}\hookrightarrow \{N\in \mathrm{Iso}D^b(Q)\ |\ N\le_{\Delta}V(m)\}\] is surjective, it is enough to show that, for any  minimal degeneration $M_{i-1}\le_{\mathrm{ext}}M_i$, we have $m^{(i-1)}\le m^{(i)}$. Here $m^{(i)}\in \sM^+$ is the monomial satisfying  $V(m^{(i)})\cong M_{i}$.

  The mesh relations of the Auslander--Reiten quiver tell us that each Auslander--Reiten triangle gives rise to a degeneration respect to the Nakajima's order. Namely, let 
  \[\tau Y\to Z\to Y\to X[1]\]
  be an Auslander--Reiten triangle, then we have $z\le y'y$, where $V(y')=\tau Y$, $V(y)=Y$ and $V(z)=Z$. In the following, we generalize this result to arbitrary triangles where the first and the third terms are indecomposable objects.

\begin{lem}\label{lem:Triangle2Order}
  Let   $[\xi]: V(Y_{i,p})\to V(n)\to V(Y_{j,s})\to V(Y_{i,p})[1]$ be a triangle. Then $n\le Y_{i,p}Y_{j,s}$.
\begin{proof}
  If $[\xi]=0$, then the triangle splits and $V(n)\cong V(Y_{i,p})\oplus V(Y_{j,s})$, which implies $n=Y_{i,p}Y_{j,s}$; Otherwise, denote $\sE=\sE xt(V(Y_{i,p}),V(Y_{j,s}))$. It follows from Equation (\ref{eq:barQTStd}) that
   \begin{align*}
       \Phi(\overline{M_t(Y_{i,p}Y_{j,s})}) =&v^{-\alpha(Y_{i,p}Y_{j,s})+1}(v-v^{-1})^2u_{V(Y_{i,p})} u_{V(Y_{j,s})}\\
       =&(v-v^{-1})^2v^{-\alpha(Y_{i,p}Y_{j,s})+1}\sum_{N\in \sE} F_{V(Y_{i,p}),V(Y_{j,s})}^N {u}_N
    \end{align*}
  Moreover, for any $n'\in \sM^+$, by Equation (\ref{qtStd}),  $\Phi(M_t(n'))=\beta(n')u_{V(n')}$, for some positive coefficient $\beta(n')$,  which implies
  \[\Phi(\overline{M_t(Y_{i,p}Y_{j,s})})=\sum_{n',\ V(n')\in \sE} b(n')\Phi(M_t(n')),\]
  for some positive coefficients $b(n')$.
  On the other hand, by Equation (\ref{barInv}), we also have
  \[\Phi(\overline{M_t(Y_{i,p}Y_{j,s})})=\sum_{n'\le m,\ n'\in \sM^+} a(n') \Phi(M_t(n')).\]
  Since $(\Phi(M_t(n'))_{n'\in \sM^+}$ forms a basis of $\sD\sH(Q)$ and $b(n')\ne 0$ for all $n'$ such that $V(n')\in \sE$, we must have $n'\le Y_{i,p}Y_{j,s}$ for any $n'$. Therefore, since $V(n)\in \sE$,  we deduce $n\le Y_{i,p}Y_{j,s}$.
\end{proof}
\end{lem}

Combining the above lemma with Corollary \ref{cor:Min2Ext} Lemma \ref{lem:MinExtTriangle}, we have the following
\begin{prop}\label{prop:degOrdtoNKJMOrd}
 For any minimal degeneration $V(x)\le_{\Delta}V(y)$, we have $x\le y$.
\end{prop}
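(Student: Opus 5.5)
Since $V(x)\le_{\Delta}V(y)$ is minimal, Corollary \ref{cor:Min2Ext} makes it an ext-degeneration. Lemma \ref{lem:MinExtTriangle} then gives two indecomposable direct summands $Y_1,Y_2$ of $V(y)$ and a non-split triangle $Y_1\to E\to Y_2\to Y_1[1]$ such that
\[V(x)\cong E\oplus \bigoplus_{j\neq 1,2} Y_j,\]
where $V(y)\cong Y_1\oplus Y_2\oplus\bigoplus_{j\ne 1,2}Y_j$. The plan is to translate this decomposition into monomials and apply Lemma \ref{lem:Triangle2Order} to the triangle.

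First we set up notation. By Happel's Theorem \ref{Happel's Thm}, $V(-)$ gives a bijection between dominant monomials and isoclasses in $D^b(Q)$, compatible with direct sums and products: $V(ab)\cong V(a)\oplus V(b)$. Write $Y_1=V(Y_{i,p})$ and $Y_2=V(Y_{j,s})$. Let $r$ be the dominant monomial with $V(r)\cong\bigoplus_{j\ne1,2}Y_j$, and let $n$ be the one with $V(n)\cong E$. Then $y=Y_{i,p}Y_{j,s}\,r$. On the other side, $V(x)\cong V(n)\oplus V(r)=V(nr)$, so by injectivity of $V(-)$ on isoclasses we get $x=nr$.

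Next, Lemma \ref{lem:Triangle2Order} applied to the triangle $V(Y_{i,p})\to V(n)\to V(Y_{j,s})\to V(Y_{i,p})[1]$ gives $n\le Y_{i,p}Y_{j,s}$. This means $n=Y_{i,p}Y_{j,s}A^{-1}$ for some finite product $A$ of the $A_{k,q+1}$. Multiplying by $r$ gives
\[x=nr=Y_{i,p}Y_{j,s}\,r\,A^{-1}=yA^{-1},\]
so $x\le y$. Nakajima's order is visibly multiplicative in this sense, since it is defined by multiplying by inverses of the $A$'s, so no further argument is needed for this step.

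The only potential subtlety is the bookkeeping in the decomposition from Lemma \ref{lem:MinExtTriangle}: we must make sure that the summands $Y_j$ with $j\ne1,2$ are exactly the complement of $Y_1\oplus Y_2$ in $V(y)$, counted with multiplicity. We must also make sure that $E$, which may be decomposable, still corresponds to a single dominant monomial $n$. Both hold because every object of $D^b(Q)$ is a finite direct sum of indecomposables $V(i,p)$ and Krull--Schmidt holds, so no real obstacle arises.
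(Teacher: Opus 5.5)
Your proposal is correct and follows the paper's own route. The paper obtains this proposition by combining Corollary \ref{cor:Min2Ext}, Lemma \ref{lem:MinExtTriangle} and Lemma \ref{lem:Triangle2Order}, and your bookkeeping $y=Y_{i,p}Y_{j,s}\,r$, $x=nr$, $n=Y_{i,p}Y_{j,s}A^{-1}$, hence $x=yA^{-1}$, simply writes out the step the paper leaves implicit.
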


Our main result follows.
\begin{teo}\label{Thm:Correspondence}
The map $V(-)$ is an isomorphism of partially ordered sets:
\[ V(-): \{m\in \mathcal{M}^+\} \xrightarrow{\sim} \{ N\in \mathrm{Iso}D^b(Q)\},\quad m\mapsto V(m).\]
In particular, for any dominant monomials $m$ and $m'$, 
    \[ m'\leq m \quad \text{ if and only if } \quad V(m')\leq_{\Delta} V(m).\]

\end{teo}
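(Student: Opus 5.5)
We will prove the two implications separately; bijectivity of $V(-)$ on underlying sets is Happel's Theorem \ref{Happel's Thm}, since a dominant monomial $\prod_k Y_{i_k,p_k}$ corresponds to the object $\bigoplus_k V(i_k,p_k)$ and every object of $D^b(Q)$ is a finite direct sum of indecomposables.

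\emph{From degenerations to Nakajima's order.} Let $V(m')\le_{\Delta} V(m)$. By Proposition \ref{prop:FiniteDeg} and Corollary \ref{cor:ChainMinDeg} we choose a finite chain of minimal degenerations
\[V(m')=N_0\le_{\mathrm{ext}}N_1\le_{\mathrm{ext}}\cdots\le_{\mathrm{ext}}N_l=V(m),\]
and write $N_i=V(m^{(i)})$. By Proposition \ref{prop:degOrdtoNKJMOrd}, $m^{(i-1)}\le m^{(i)}$ for every $i$, and transitivity of $\le$ gives $m'\le m$. To make sure Proposition \ref{prop:degOrdtoNKJMOrd} really holds, we spell out its short proof. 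By Lemma \ref{lem:MinExtTriangle}, a minimal step has the form $N_{i-1}\cong E\oplus R$ and $N_i\cong Y_1\oplus Y_2\oplus R$, with $Y_1,Y_2$ indecomposable and a triangle $Y_1\to E\to Y_2\to Y_1[1]$. Lemma \ref{lem:Triangle2Order} gives $e\le y_1y_2$, where $V(e)=E$ and $V(y_k)=Y_k$. The order $\le$ is compatible with multiplication by a fixed monomial, since $m'=mA^{-1}$ implies $m'r=(mr)A^{-1}$. Hence $e\,r\le y_1y_2\,r$, that is, $m^{(i-1)}\le m^{(i)}$.

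\emph{From Nakajima's order to degenerations.} Let $m'\le m$. We realize both monomials through one graded quiver variety. Write $m=\prod Y_{j,s}^{w_{j,s}}$ and set $W_{j,s}=\mathbb{C}^{w_{j,s}}$, so that $m=m(0,W)$. Since $m'\le m$, we have $m'=mA^{-1}$ with $A=\prod A_{i,p}^{v_{i,p}}$ and $v_{i,p}\ge 0$, $(i,p)\in\widehat{I}'$. Take $V'$ with $\dim V'_{i,p}=v_{i,p}$; then $m'=m(V',W)$. The monomial $m'$ is dominant, so by Proposition \ref{prop_qin}(1) the stratum $\mathfrak{M}_0^{\bullet\mathrm{reg}}(V',W)$ is non-empty. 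The stratum for $V=0$ is also non-empty. By Proposition \ref{prop_qin}(2) applied to $m(V',W)\le m(0,W)$, the stratum $\mathfrak{M}_0^{\bullet\mathrm{reg}}(0,W)$ lies in the closure of $\mathfrak{M}_0^{\bullet\mathrm{reg}}(V',W)$. Proposition \ref{pro_KelScher_bijection} then gives $V(m')\le_{\Delta}V(m)$.

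\emph{Main obstacle.} The delicate step is the first direction, in particular the termination issue: the chain of minimal degenerations must be finite, which is exactly Proposition \ref{prop:FiniteDeg}. Within that direction, Lemma \ref{lem:Triangle2Order} uses positivity of the Hall numbers together with unitriangularity of the bar involution (\ref{barInv}). I would double-check that its conclusion covers every middle term $E$ of a triangle between two indecomposables, not only those appearing in a particular product expansion. This holds because $F^{E}_{Y_1,Y_2}\neq0$ for every such $E$ and the coefficients $b(n')$ are positive, so no cancellation can remove $E$ from the expansion.
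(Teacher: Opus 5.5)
Your proposal is correct and follows the paper's own proof. For $m'\le m\Rightarrow V(m')\le_{\Delta}V(m)$ you use the quiver-variety route via Propositions \ref{prop_qin} and \ref{pro_KelScher_bijection}. For the converse you take a finite chain of minimal ext-degenerations (Proposition \ref{prop:FiniteDeg}, Corollary \ref{cor:ChainMinDeg}) and apply Proposition \ref{prop:degOrdtoNKJMOrd} step by step. Your explicit use of Proposition \ref{prop_qin}(2) for the closure inclusion, and of the compatibility of $\le$ with multiplication by a fixed monomial to deduce Proposition \ref{prop:degOrdtoNKJMOrd} from Lemmas \ref{lem:MinExtTriangle} and \ref{lem:Triangle2Order}, simply spell out steps the paper leaves implicit.
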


\begin{proof}
Let $m'$ and $m$ be dominant monomials. Assume first that $m'\leq m$. Let $W$ and $V$ be respectively a $\widehat{I}$-graded $\mathbb{C}$-vector space and a $\widehat{I}'$-graded $\mathbb{C}$-vector space such that $m=m(0,W)$ and $m'=m(V,W)$. Then, by Proposition \ref{prop_qin}, the regular stratum $\mathfrak{M}^\mathrm{reg}_0(V,W)$ is not empty. Therefor, by Proposition \ref{pro_KelScher_bijection}, this implies that $V(m')\leq V(m)$.

 For any $N\le_{\Delta}M$, denote $M=V(m)$ and $N=V(m')$ for some $m',m\in \sM^+$. By Corollary \ref{cor:ChainMinDeg}, we have a chain of minimal extensions of finite length:
 \[N=M_1\le_{\mathrm{ext}}M_2\le_{\mathrm{ext}}\cdots \le_{\mathrm{ext}} M_{s-1}\le_{\mathrm{ext}}M_s=M.\]
Write $V(m^{(i)})=M_i$.  By Proposition \ref{prop:degOrdtoNKJMOrd}, we deduce that
\[m'\le m^{(1)}\le \cdots \le m^{(s)}=m,\]
as we wanted.
\end{proof}

\begin{cor}
For any $\widehat{I}$-graded $\mathbb{C}$-vector space $W$, the following partially ordered sets are isomorphic:
\[\begin{tikzpicture}[
   hookright/.style={{Hooks[right]}->},
   hookleft/.style={{Hooks[left]}->},]
\node (M) at (0,0) {$\{ \ssM_0^{\bullet\mathrm{reg}}(V,W)\ |\ \ssM_0^{\bullet\mathrm{reg}}(V,W)\neq \emptyset\}$};
\node (m) at (8,0) {$\{m'\in \mathcal{M}^+\ |\ m'\leq m(0,W)\}$};
\node (D) at (4,-2.5) {$\{N\in \mathrm{Iso}(D^b(Q))\ |\ N\leq V(m(V,W))\}$};
\draw[->] (M) --  
(m) node[midway, above] {$\sim$}
node[midway, below] {$m(-,-)$};
\draw[->] (M) --  
(D) node[midway, below, sloped] {$\phi$} node[midway, above, sloped] {$\sim$};
\draw[->] (m) --  
(D) node[midway, below, sloped] {$V(-)$} node[midway, above, sloped] {$\sim$};;
\end{tikzpicture}
\]
\end{cor}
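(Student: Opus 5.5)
The plan is to deduce the corollary formally from Theorem \ref{Thm:Correspondence}, Proposition \ref{prop_qin} and Lemma \ref{lem_KellerScheroktze_map}, by showing that the triangle commutes and that two of its sides are isomorphisms. Throughout, set $m_0=m(0,W)$. I read the bottom vertex of the diagram as $\{N\in \mathrm{Iso}D^b(Q)\mid N\le_{\Delta}V(m_0)\}$, i.e.\ the image of the top-right vertex; this is the set for which the statement is meant.

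\emph{The top arrow.} First I check that $m(-,-)$ is a bijection onto $\{m'\in\sM^+\mid m'\le m_0\}$. It lands there: $m(V,W)\le m_0$ holds by construction, and $m(V,W)$ is dominant whenever the stratum is non-empty by Proposition \ref{prop_qin}(1). For surjectivity, take a dominant $m'\le m_0$ and write $m'=m_0A^{-1}$ with $A=\prod A_{i,p}^{v_{i,p}}$. Let $V$ have graded dimension $(v_{i,p})$; then $m(V,W)=m'$, and the stratum is non-empty by Proposition \ref{prop_qin}(1). For injectivity, I use that in Dynkin type the monomials $A_{i,p}$, $(i,p)\in\widehat{I}'$, are algebraically independent in the Laurent ring. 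This follows from the invertibility of the (quantum) Cartan matrix, or by induction on $p$ looking at the lowest index. Hence the exponents $\dim V_{i,p}$ are determined by $m(V,W)$. Order compatibility in both directions is exactly Proposition \ref{prop_qin}(2), taken with the paper's convention that a stratum is smaller when it lies in the closure of the other.

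\emph{The right arrow.} By Theorem \ref{Thm:Correspondence}, $V(-)$ is a bijection from $\sM^+$ onto $\mathrm{Iso}D^b(Q)$ that both preserves and reflects the orders. It therefore restricts to an order isomorphism from the down-set $\{m'\le m_0\}$ onto the down-set $\{N\le_{\Delta}V(m_0)\}$. The only thing to verify is surjectivity onto the latter set: if $N\le_{\Delta}V(m_0)$, write $N=V(m')$, and the theorem gives $m'\le m_0$.

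\emph{The left arrow and commutativity.} Lemma \ref{lem_KellerScheroktze_map} says that $\phi$ is constant on each regular stratum and equals $V(m(V,W))$ there. Hence $\phi$ factors as the composite of the two arrows just discussed, and is therefore an isomorphism of posets. This agrees with Proposition \ref{pro_KelScher_bijection}, which can serve as a consistency check.

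I expect the only genuinely non-formal point to be the injectivity of $V\mapsto m(V,W)$, i.e.\ the algebraic independence of the $A_{i,p}$. If one prefers to avoid it, one can instead identify strata with their monomials, which is how Qin's Proposition \ref{prop_qin} is phrased. Everything else is bookkeeping of order directions, and I would double-check that the closure order and $\le_{\Delta}$ are oriented consistently with the paper's reversed convention.
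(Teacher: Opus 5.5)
Your proposal is correct and is essentially the argument the paper leaves implicit. The top arrow comes from Proposition \ref{prop_qin}; you additionally justify its injectivity via the algebraic independence of the $A_{i,p}$, which the paper takes for granted. The right arrow is Theorem \ref{Thm:Correspondence} restricted to the down-set of $m(0,W)$. Lemma \ref{lem_KellerScheroktze_map} makes the triangle commute, so $\phi$ is the composite of the other two isomorphisms.

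One orientation fix is needed. By Proposition \ref{prop_qin}(2), a stratum lying in the closure of another has the \emph{larger} monomial. So the order on strata must be $S\le S'$ iff $S'\subset\overline{S}$, which matches the reversed convention for $\le_{\Delta}$. Your stated convention, that a stratum is ``smaller when it lies in the closure,'' would make $m(-,-)$ order-reversing.
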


\subsection{Degenerations in type $A_n$}
In this section, we assume that the quiver $Q$ is of type $A_n$ and, for any minimal degeneration $V(m')\leq_{\Delta} V(m)$, we provide a formula for the monomial $A$ in the variables $A_{i,p+1}$, $(i,p)\in \widehat{I}$, such that 
\[ m'=mA^{-1}.\]
The repetitive quiver $\widehat{Q}$ is locally as follows:
\[\begin{tikzcd}[sep=small]
    &(i,1)\arrow[rd] & &(i,3)\arrow[rd] & &(i,5)\arrow[rd] & \\
 (i+1,0)\arrow[rd]\arrow[ru] & &(i+1,2)\arrow[rd]\arrow[ru] & &(i+1,4)\arrow[rd]\arrow[ru] & &(i+1,6) \\
  &(i+2,1)\arrow[rd]\arrow[ru] & &(i+2,3)\arrow[rd]\arrow[ru] & &(i+2,5)\arrow[rd]\arrow[ru] &\\
  (i+3,0)\arrow[rd]\arrow[ru] & &(i+3,2)\arrow[rd]\arrow[ru] & &(i+3,4)\arrow[rd]\arrow[ru] & &(i+3,6) \\
    &(i+4,1)\arrow[ru] & &(i+4,3)\arrow[ru] & &(i+4,5)\arrow[ru] & \\
\end{tikzcd}\]
Note that, for any indecomposable objects $X,Y\in D^b(Q)$, we have
$$\dim \Ext^1_{D^b(Q)}(Y,X)\le 1\qquad\text{and}\qquad \dim \Hom_{D^b(Q)}(X,Y)\le 1.$$
Hence, $\Ext^1_{D^b(Q)}(Y,X)\ne 0$, the non-trivial extension $Z$ of $X$ by $Y$ consists of at most two indecomposable objects (this can be deduced by applying $\R\Hom_{D^b(Q)}(X,-)$ to the corresponding triangle). 
Recall from Section \ref{subsect_Happel} Happel's functor $V(-)$.
For convenience of notation, for any $i=0$ and $p\in \Z$, we set $V(i,p)=0$.
Let $(i,p)$, $(i',p')$ be in $\widehat{I}$ and, for $k=1,2$, take $i_k$ in $I\sqcup\{0,n+1\}$ and $p_k$ in $\mathbb{Z}$ with $p_1\leq p_2$.
Then, by \cite[Section 3.1]{schiffler2014}, there exists a non-split triangle 
  $$V(i,p)\to V(i_1,p_1)\oplus V(i_2,p_2)\to V(i',p')\to V(i,p)[1]$$ if and only if there exist positive integers $a\ge b \ge 1$ satisfying one of the following conditions:

\begin{itemize}
  \item[(C1)]\label{Cond1} $i-b$, $i+a\in \{0,1,\cdots,n,n+1\}$, and $(i_1,p_1)=(i-b,p+b)$, $(i_2,p_2)=(i+a,p+a)$, and $(i',p')=(i+a-b,p+a+b)$.
  
  \item [(C2)]\label{Cond2} $i+b$, $i-a\in \{0,1,\cdots,n,n+1\}$, and $(i_1,p_1)=(i+b,p+b)$, $(i_2,p_2)=(i-a,p+a),$ and $(i',p')=(i+b-a,p+a+b)$.
    
\end{itemize}

Graphically, at the level of the repetitive quiver, these two conditions translates into  configurations where the indecomposable objects of the triangle sit at the vertices of a parallelogram,  respectively of the form:
\[
  \begin{tikzpicture}
    \draw (0,0)--(2,1);
    \draw (2,1)--(5,-1);
    \draw (0,0)--(3,-2);
    \draw (3,-2)--(5,-1);
     \draw (8,-1)--(10,-2);
    \draw (10,-2)--(13,0);
    \draw (8,-1)--(11,1);
    \draw (11,1)--(13,0);
    \node[anchor=east] at (0,0){$V(i,p)$}; 
    \node[anchor=south] at (2,1) {$V(i_1,p_1)$};
    \node[anchor=north] at (3,-2) {$V(i_2,p_2)$};
    \node[anchor=south west] at (4.7,-1) {$V(i',p')$};
     \node[anchor=north east] at (8.5,-1.1){$V(i,p)$}; 
    \node[anchor=north] at (10,-2) {$V(i_1,p_1)$};
    \node[anchor=south] at (11,1) {$V(i_2,p_2)$};
    \node[anchor=west] at (13,0) {$V(i',p')$};
  \end{tikzpicture}
\]

By Lemma \ref{lem:Triangle2Order}, for any $V(Y_{i,p})$ and $V(Y_{j,s})$ such that there exists a triangle 
$$V(Y_{i,p})\to V(m')\to V(Y_{j,s})\to V(Y_{i,p})[1],$$
we have $m'\le Y_{i,p}Y_{j,s}$, i.e. $m'=Y_{i,p}Y_{j,s}A^{-1}$ with $A$ a product of certain $A_{i',p'}$ ($(i',p')\in \widehat{I}'$). In the following proposition we give an explicit description of the monomial $A$.
\begin{prop}
  For any ext-degeneration $m'\le_{\Delta} Y_{i,p}Y_{j,s}$ there exists $1\le b\le a$ satisfying Condition $(C1)$ or $(C2)$. Moreover,
  \[m'=Y_{i,p}Y_{j,s}\prod_{0\le r<a}\ \prod_{0\le l<b}A^{-1}_{i+r-l,p+r+l+1},\]
  if $(a,b)$ satisfies Condition $(C1)$ and
  \[m'=Y_{i,p}Y_{j,s}\prod_{0\le r<a}\ \prod_{0\le l<b}A^{-1}_{i-r+l,p+r+l+1},\]
 if $(a,b)$ satisfies Condition $(C2)$.
\end{prop}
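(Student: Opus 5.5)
First I identify the triangle. An ext-degeneration $V(m')\,\le_{\mathrm{ext}}\, V(Y_{i,p})\oplus V(Y_{j,s})$ with $V(m')\not\cong V(Y_{i,p})\oplus V(Y_{j,s})$ is, by definition, a non-split triangle between the two indecomposables. After possibly swapping the roles, this triangle is
\[V(i,p)\to V(m')\to V(j,s)\to V(i,p)[1].\]
The case $V(m')\cong V(i,p)\oplus V(j,s)$ is trivial: there $A=1$. In type $A_n$ we have $\dim\Ext^1\le 1$, so the non-split middle term is unique, and the remark preceding the proposition says it has at most two indecomposable summands. By the cited description from \cite[Section 3.1]{schiffler2014}, there are then $a\ge b\ge 1$ satisfying (C1) or (C2). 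In that case $(j,s)=(i',p')$ and
\[m'=Y_{i_1,p_1}Y_{i_2,p_2},\]
with the convention that $Y_{0,q}=Y_{n+1,q}=1$, matching $V(0,q)=V(n+1,q)=0$. This gives the first assertion.

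The second assertion is a pure monomial identity, and I check it directly. Take (C1), where $(i_1,p_1)=(i-b,p+b)$, $(i_2,p_2)=(i+a,p+a)$ and $(j,s)=(i+a-b,p+a+b)$. In type $A$,
\[A_{k,q+1}=Y_{k,q}\,Y_{k,q+2}\,Y_{k-1,q+1}^{-1}\,Y_{k+1,q+1}^{-1}.\]
(I read the paper's ``$Y_{i,p}Y_{i,p}$'' as the standard $Y_{i,p}Y_{i,p+2}$.) I need to show
\[P:=\prod_{0\le r<a}\ \prod_{0\le l<b}A_{i+r-l,\,p+r+l+1}=\frac{Y_{i,p}\,Y_{i+a-b,\,p+a+b}}{Y_{i-b,\,p+b}\,Y_{i+a,\,p+a}}.\]

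My plan is to use the ``rectangle'' change of coordinates $x=k+q$, $y=q-k$. In these coordinates the index of $A_{i+r-l,p+r+l+1}$ moves by $r$ in one diagonal direction and by $l$ in the other, so $P$ is a product over an $a\times b$ rectangle in the rotated grid. Each factor $A$ is a ``mesh'' with two positive corners, $(k,q)$ and $(k,q+2)$, and two negative corners, $(k\pm1,q+1)$. When these are multiplied over the rectangle, all interior vertices cancel telescopically. I would organise the cancellation in two steps. First, multiply along a single diagonal row ($r$ fixed, $l$ varying), which gives a ``one-dimensional'' telescoped product. Then multiply these over $r$. Only the four corner vertices of the rectangle survive: the two positive corners $(i,p)$ and $(i+a-b,p+a+b)$, and the two negative corners $(i-b,p+b)$ and $(i+a,p+a)$.

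Case (C2) is the mirror image under $k\mapsto -k$, which replaces $i+r-l$ by $i-r+l$, so the same computation applies.

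The main obstacle is the boundary bookkeeping. When $i-b=0$ or $i+a=n+1$, the corner vertices lie outside $I$. There, the definition of $A$ at $k=1$ or $k=n$ omits the neighbour $Y_{0,\cdot}$ or $Y_{n+1,\cdot}$. I have to check that the telescoping remains consistent in this case, that is, that the missing factors are exactly the ones set to $1$ by the convention. I also have to confirm the exact indexing convention (the $+1$ shift in $A_{i,p+1}$) so that the rectangle's corners land exactly on the four stated vertices. I would do this by checking the smallest case $a=b=1$ directly: this is the Auslander--Reiten mesh, where $m'$ is the product of the middle terms and $P=A_{i,p+1}$. Then I would induct on $a$ and $b$, adding one row or column of the rectangle at a time.
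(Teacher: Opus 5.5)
Your proposal is correct and follows essentially the paper's route. The first assertion is read off from Schiffler's description of non-split triangles. For the second, write $v(r,l)=(i+r-l,p+r+l)$, so that $A_{i+r-l,p+r+l+1}=Y_{v(r,l)}Y_{v(r+1,l+1)}Y_{v(r,l+1)}^{-1}Y_{v(r+1,l)}^{-1}$. Your telescoping of these factors over the $a\times b$ grid is the closed form of the paper's induction, which cuts the parallelogram into sub-parallelograms of types $(a,b-1)$ and $(a,1)$, and the $(a,1)$ one into $(a-1,1)$ and a single mesh.

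The boundary issue you flag is harmless. Every factor has first index in $[i-b+1,i+a-1]\subseteq I$. Among the grid vertices, only the negative corners $v(0,b)$ and $v(a,0)$ can have first index $0$ or $n+1$. So you can telescope formally in all $Y_{k,q}$ with $k\in\mathbb{Z}$ and then specialize $Y_{0,\cdot}=Y_{n+1,\cdot}=1$, which gives the identity.
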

\begin{proof}
The first statement  follows from the above discussion.
We only need to show the second statement holds if $(a,b)$ satisfies Condition $(C1)$, as the other case can be proved in the similar way.

The base case $a=1$ and $b=1$ is just a mesh. Assume that $b=1$ and that the statement holds up to $a-1\geq 1$. We want to prove the statement for $(a,1)$.  By condition $(C1)$, we have $m'=Y_{i_1,p_1}Y_{i_2,p_2}$, where $(i_1,p_1)=(i-1,p+1)$ and $(i_2,p_2)=(i+a,p+a)$. Set $(i',p')=(i+a-2,p+a)$ and $(i_3,p_3)=(i+a-1,p+a-1)$.
Consider the parallelogram formed by $\bigl((i,p),(i_1,p_1),(i_2,p_2),(j,s)\bigr)$ as follows:
\[ \begin{tikzpicture}
    \draw (0,0)--(2,1);
    \draw (2,1)--(5,-1);
    \draw (0,0)--(3,-2);
    \draw (3,-2)--(5,-1);
    \draw (2,-1.33)--(4,-0.33) [dashed];
    \draw[fill=black] (4,-0.33) circle (1pt)  (2,-1.33) circle (1pt);
    \node[anchor=east] at (0,0){$V(i,p)$}; 
    \node[anchor=south] at (2,1) {$V(i_1,p_1)$};
    \node[anchor=north] at (3,-2) {$V(i_2,p_2)$};
    \node[anchor=west] at (5,-1) {$V(j,s)$};
    \node[anchor=west] at (4,-0.3) {$V(Y_{i',p'})$};
    \node[anchor=east] at (2,-1.6) {$V(Y_{i_3,p_3})$};
  \end{tikzpicture}\]
   Note that there are two sub-parallelogram: the upper and the lower one. Since  $(j,s)=(i+a-1,p+a+1)$, by induction hypothesis,
   \[Y_{i_1,p_1}Y_{i_3,p_3}=Y_{i,p}Y_{i',p'}\prod_{r=0}^{a-2}A_{i+r,p+r+1},\qquad Y_{i_2,p_2}Y_{i',p'}=Y_{i_3,p_3}Y_{j,s}A_{i+a-1,p+a}.\]
   Hence we have 
   \[Y_{i_1,p_1}Y_{i_3,p_3}Y_{i_2,p_2}=Y_{i,p}Y_{j,s}Y_{i_3,p_3}\prod_{r=0}^{a-1}A_{i+r,p+r+1},\]
   which is the desired result.  

   If $b\ge 2$, set $(i_3,p_3)=(i-b+1,p+b-1)$ and  $(i',p')=(i+a-b+1,p+a+b-1)$. As above, we obtain two sub-parallelograms as shown in the following figure: one formed by $\bigl((i,p),(i_1,p_1),(i_3,p_3),(i',p')\bigr)$, and  one formed by $\bigl((i_3,p_3),(i',p'),(i_2,p_2),(j,s)\bigr)$. 
   \[\begin{tikzpicture}
    \draw (0,0)--(2,1);
    \draw (2,1)--(5,-1);
    \draw (0,0)--(3,-2);
    \draw (3,-2)--(5,-1);
    \draw (1,0.5)--(4,-1.5) [dashed];
    \draw[fill=black] (1,0.5) circle (1pt)  (4,-1.5) circle (1pt);
    \node[anchor=east] at (0,0){$V(i,p)$}; 
    \node[anchor=south] at (2,1) {$V(i_1,p_1)$};
    \node[anchor=north] at (3,-2) {$V(i_2,p_2)$};
    \node[anchor=west] at (5,-1) {$V(j,s)$};
    \node[anchor=south] at (0.5,0.5) {$V(i_3,p_3)$};
    \node[anchor=north] at (4.7,-1.5) {$V(i',p')$};
  \end{tikzpicture}\]
  Moreover, the associated pairs are $(a,b-1)$ and $(a,1)$ respectively, and they satisfy Condition $(C1)$. By induction hypothesis, we have
   \[Y_{i_2,p_2}Y_{i_3,p_3}=Y_{i,p}Y_{i',p'}\prod_{r=0}^{a-1}\ \prod_{l=0}^{b-2}A_{i+r-l,p+r+l+1},\qquad Y_{i_1,p_1}Y_{i',p'}=Y_{i_3,p_3}Y_{j,s}\prod_{r=0}^{a-1}A_{i+r-b+1,p+r+b}.\]
   Hence we have 
   \[Y_{i_1,p_1}Y_{i_2,p_2}Y_{i_3,p_3}=Y_{i,p}Y_{j,s}Y_{i_3,p_3}\prod_{r=0}^{a-1}\ \prod_{l=0}^{b-1}A_{i+r-l,p+r+l+1},\]
   which completes the proof.

\end{proof}

\medskip
\section*{Acknowledgement}
The authors would like to thank Ryo Fujita, Bernhard Keller, David Hernandez, and Grzegorz Zwara for many fruitful discussions and useful insights.
\medskip


\providecommand{\bysame}{\leavevmode\hbox to3em{\hrulefill}\thinspace}
\providecommand{\MR}{\relax\ifhmode\unskip\space\fi MR }
\providecommand{\MRhref}[2]{%
  \href{http://www.ams.org/mathscinet-getitem?mr=#1}{#2}
}
\providecommand{\href}[2]{#2}

\end{document}